\title[Categorical Verlinde formula]
{Recent developments of the categorical Verlinde formula}
\author[K.~Shimizu]{Kenichi Shimizu}
\email{kshimizu@shibaura-it.ac.jp}
\address{Department of Mathematical Sciences \\
  Shibaura Institute of Technology \\
  307 Fukasaku, Minuma-ku, Saitama-shi, Saitama 337-8570, Japan.}
\date{}
\numberwithin{equation}{section}
\newtheorem{counter}{}[section]
\theoremstyle{definition}
\newtheorem{definition}         [counter]{Definition}
\newtheorem*{notation*}         {Notation}
\theoremstyle{plain}
\newtheorem{lemma}              [counter]{Lemma}
\newtheorem{theorem}            [counter]{Theorem}
\newtheorem*{theorem*}          {Theorem}
\theoremstyle{remark}
\newtheorem{remark}             [counter]{Remark}
\newtheorem{example}            [counter]{Example}
\newcommand{\id}{\mathrm{id}}
\newcommand{\eval}{\mathrm{eval}}
\newcommand{\coev}{\mathrm{coev}}
\newcommand{\op}{\mathrm{op}}
\newcommand{\unitobj}{\mathbbm{1}}
\newcommand{\Vect}{\mathcal{V}ec}
\DeclareMathOperator{\Hom}{\mathrm{Hom}}
\DeclareMathOperator{\Img}{\mathrm{Im}}
\DeclareMathOperator{\SL}{\mathrm{SL}}
\DeclareMathOperator{\End}{\mathrm{End}}
\DeclareMathOperator{\Rep}{\mathrm{Rep}}
\DeclareMathOperator{\Gr}{\mathrm{Gr}}
\DeclareMathOperator{\CF}{\mathrm{CF}}
\DeclareMathOperator{\FPdim}{\mathrm{FPdim}}
\begin{document}

\begin{abstract}
  I review recent developments of `non-semisimple' modular tensor categories in the sense of Lyubashenko and the categorical Verlinde formula for such categories (this is a proceedings article for {\em Meeting for Study of Number theory, Hopf algebras and related topics} held at University of Toyama, Japan, 12--15 February 2017).
\end{abstract}

\maketitle

\section{Introduction}

Let $k$ be an algebraically closed field and, to avoid technical difficulty, assume that $k$ is of characteristic zero in Introduction. A {\em fusion category} is a $k$-linear semisimple rigid monoidal category, with only finitely many isomorphism classes of simple objects, such that the tensor unit $\unitobj \in \mathcal{C}$ is a simple object. Let $\mathcal{C}$ be a fusion category equipped with a ribbon structure (which allows us to interpret link diagrams colored with objects of $\mathcal{C}$ as morphisms in $\mathcal{C}$), and let $\{ V_i \}_{i \in I}$ be a complete set of representatives of isomorphism classes of simple objects of $\mathcal{C}$. Then the S-matrix of $\mathcal{C}$ is defined to be the square matrix $S = (s_{i j})_{i, j \in I}$, where
\begin{equation*}
  \SelectTips{cm}{10}
  s_{i j} =
  \knotholesize{10pt}
  \xy /r1.5pc/:
  (0,1) \vunder, (0,0) \vunder-,
  (0,1) \hcap[-2] >, (1,1) \hcap[2] >,
  (-1.5,0) *+!{V_i},
  (+2.5,0) *+!{V_j,} \endxy
\end{equation*}
that is, the invariant of the Hopf link colored with $V_i$ and $V_j$. A {\em modular tensor category} is a ribbon fusion category whose S-matrix is invertible. For fusion categories and modular tensor categories, see \cite{MR1797619,MR3242743}

The name `modular' comes from the fact that a modular tensor category yields a projective representation of the modular group $\SL_2(\mathbb{Z})$ \cite[Remark 3.1.2]{MR1797619}: We recall that the modular group $\SL_{2}(\mathbb{Z})$ is generated by
\begin{equation*}
  \mathfrak{s} := \begin{pmatrix}
    0 & -1 \\ 1 & 0
  \end{pmatrix}
  \quad \text{and} \quad
  \mathfrak{t} := \begin{pmatrix}
    1 & 1 \\ 0 & 1
  \end{pmatrix}
\end{equation*}
with relations $(\mathfrak{s} \mathfrak{t})^3 = \mathfrak{s}^2$ and $\mathfrak{s}^4 = 1$. Let $\mathcal{C}$ be a modular tensor category over $\mathbb{C}$, and let $\{ V_i \}_{i \in I}$ be a complete set of representatives of isomorphism classes of simple objects of $\mathcal{C}$. Then the assignment
\begin{equation*}
  \mathfrak{s} \mapsto (s_{i j})_{i, j \in I},
  \quad
  \mathfrak{t} \mapsto (\delta_{i j} \theta_i)_{i, j \in I}
\end{equation*}
defines a projective representation of $\SL_2(\mathbb{Z})$, where $\theta_i$ is the scalar such that the twist of $\mathcal{C}$ is equal to $\theta_i \id_{V_i}$ on the simple object $V_i$.

This projective representation of $\SL_2(\mathbb{Z})$ is in fact a part of the 3-dimensional topological quantum field theory arising from a modular tensor category. In particular, a modular tensor category yields an invariant of oriented closed 3-manifolds and a projective representation of the mapping class group of any closed oriented surface; see \cite{MR1797619} and \cite{MR1292673}. The action of $\SL_2(\mathbb{Z})$ is just the case where the surface is a torus.

Another important result on modular tensor categories is the {\em categorical Verlinde formula} that describes the decomposition rule of the tensor product. We keep the above notation and suppose that we have $[V_i \otimes V_j] = \sum_{k \in I} N_{i j}^k [V_k]$ ($i, j \in I$) in the Grothendieck ring of $\mathcal{C}$. The formula states
\begin{equation}
  \label{eq:cat-Ver-ss}
  N_{i j}^a = \frac{1}{\dim(\mathcal{C})} \sum_{r \in I} \frac{s_{i r} s_{j r} s_{a^* r}}{\dim(V_r)}
  \quad (i, j, a \in I),
\end{equation}
where $a^* \in I$ for $a \in I$ is the label such that $V_{a^*}$ is dual to $V_a$,
\begin{equation*}
  \dim(V_r) = \SelectTips{cm}{10} \xy /r1pc/:
  (0,1) \hcap[-2] >, (0,1) \hcap[2], (1.75,0) *+!{V_r}
  \endxy
  \quad (=\text{the quantum dimension of $V_r$}),
\end{equation*}
and $\dim(\mathcal{C}) = \sum_{r \in I} \dim(V_r)^2$.

It is interesting to drop the semisimplicity assumption from the definition of modular tensor categories. Lyubashenko \cite{MR1324034,MR1352517,MR1324033,MR1354257} has introduced such a `non-semisimple' generalization of modular tensor categories and showed that a `non-semisimple' modular tensor category also yields a topological invariants of closed 3-manifolds and a projective representation of $\SL_2(\mathbb{Z})$. This notion can be a nice categorical framework to study representation theory of factorizable Hopf algebras such as so-called small quantum groups. From a physical point of view, `non-semisimple' modular tensor categories are also expected that they are useful in the study of logarithmic conformal field theories; see, {\it e.g.},  \cite{MR3146014,2016arXiv160504630C,2016arXiv160504448G,2017arXiv170201086F,2017arXiv170300150G,2017arXiv170608164F}.

This article reviews algebraic aspects of such `non-semisimple' modular tensor categories. The organization is as follows: Lyubashenko's notion of a modular tensor category is defined as a `non-degenerate' braided finite tensor category equipped with a ribbon structure. We first explain what does `non-degenerate' means in Section~\ref{sec:mtc}, and then give several characterization of the non-degeneracy in Section~\ref{sec:characterizations} following \cite{2016arXiv160206534S}. The results of \cite{2016arXiv160206534S} is useful to provide examples of modular tensor categories; see Section~\ref{sec:examples}. Finally, in Section~\ref{sec:verlinde-formula}, we review a non-semisimple version of the categorical Verlinde formula recently proposed by Gainutdinov and Runkel in \cite{2016arXiv160504448G}.

\subsection*{Acknowledgment}

The author would like to thank the organizers of {\em Meeting for Study of Number theory, Hopf algebras and related topics}. The author is supported by JSPS KAKENHI Grant Number 16K17568.

\section{Modular tensor categories}
\label{sec:mtc}

\subsection{Monoidal categories}

We fix our convention for monoidal categories and related notions \cite{MR1712872,MR3242743}. In view of Mac Lane's coherence theorem, we assume that all monoidal categories are strict. Let $\mathcal{C}$ be a monoidal category with tensor product $\otimes$ and tensor unit $\unitobj$. Let $L$ and $R$ be objects of $\mathcal{C}$, and let $\varepsilon: L \otimes R \to \unitobj$ and $\eta: \unitobj \to R \otimes L$ be morphisms in $\mathcal{C}$. We say that $(L, \varepsilon, \eta)$ is a {\em left dual object} of $R$ and $(R, \varepsilon, \eta)$ is a {\em right dual object} of $\mathcal{C}$ if the equations
\begin{equation*}
  (\varepsilon \otimes \id_L) \circ (\id_L \otimes \eta) = \id_L
  \quad \text{and} \quad
  (\id_R \otimes \varepsilon) \circ (\eta \otimes \id_R) = \id_R
\end{equation*}
hold. The monoidal category $\mathcal{C}$ is said to be {\rm rigid} if every object of $\mathcal{C}$ has a left dual object and a right dual object.

Now we suppose that $\mathcal{C}$ is a rigid monoidal category. Given an object $X \in \mathcal{C}$, we denote by $(X^*, \eval_X: X^* \otimes X \to \unitobj, \coev_X: \unitobj \to X \otimes X^*)$ the (fixed) left dual object of $X$. We note that the assignment $X \mapsto X^*$ is a contravariant endofunctor on $\mathcal{C}$ that preserves the tensor unit and reverses the order of the tensor product.

\subsection{Finite tensor categories}

By a $k$-algebra, we always mean an associative and unital algebra over $k$. Given a $k$-algebra $A$, we denote by $\Rep(A)$ the category of finite-dimensional left $A$-modules. A {\em finite abelian category} (over $k$) is a $k$-linear category that is $k$-linearly equivalent to $\Rep(A)$ for some finite-dimensional $k$-algebra $A$. Of course, such an algebra $A$ is determined only up to Morita equivalence. An intrinsic definition of a finite abelian category is found in, for example, \cite[Section 1.8]{MR3242743}.

\begin{definition}
  A {\em finite tensor category} \cite{MR2119143} is a rigid monoidal category $\mathcal{C}$ such that $\mathcal{C}$ is a finite abelian category, the tensor product of $\mathcal{C}$ is $k$-bilinear, and the tensor unit of $\mathcal{C}$ is a simple object. A {\em fusion category} \cite{MR2183279} is a synonym for a semisimple finite tensor category.
\end{definition}

Let $\mathcal{C}$ be a finite tensor category. By a {\em tensor full subcategory} of $\mathcal{C}$, we mean a non-zero full subcategory of $\mathcal{C}$ closed under taking finite direct sums, subquotients, tensor products and duals. We note that a tensor full subcategory of a finite tensor category is also a finite tensor category such that the inclusion functor preserves and reflects exact sequences.

\subsection{Coends}

Let $\mathcal{C}$ and $\mathcal{V}$ be categories, and let $H$ be a functor from $\mathcal{C}^{\op} \times \mathcal{C}$ to $\mathcal{V}$. A {\em coend} of $H$ is an object $F \in \mathcal{V}$ equipped with a family
\begin{equation*}
  \iota = \{ \iota_X: H(X, X) \to F \}_{X \in \mathcal{C}}
\end{equation*}
of morphisms in $\mathcal{V}$ parametrized by objects of $\mathcal{C}$ such that the following two conditions are satisfied:
\begin{enumerate}
\item The family $\iota$ is a {\em dinatural transformation} from $H$ to $F$ meaning that, for all morphisms $f: X \to Y$ in $\mathcal{C}$,  the following diagram commutes:
  \begin{equation*}
    \xymatrix@R=5pt@C=56pt{
      & H(X, X) \ar[rd]^{\iota_X} \\
      H(Y, X)
      \ar[ru]^{H(f, \id_X)}
      \ar[rd]_{H(\id_Y, f)}
      & & F. \\
      & H(Y, Y) \ar[ru]_{\iota_Y}
    }
  \end{equation*}
\item For every dinatural transformation $\iota' = \{ \iota'_X: H(X, X) \to F' \}_{X \in \mathcal{C}}$ from $H$ to an object $F' \in \mathcal{V}$, there exists a unique morphism $\phi: F \to F'$ in $\mathcal{V}$ such that $\phi \circ \iota_X = \iota'_X$ for all objects $X \in \mathcal{C}$ (the universal property).
\end{enumerate}
If it exists, a coend of $H$ is unique up to canonical isomorphism by the universal property. Following \cite{MR1712872}, we denote by
\begin{equation*}
  \int^{X \in \mathcal{C}} H(X, X)
\end{equation*}
the coend of $H$. See \cite{MR1712872} for more details.

\subsection{Modular tensor categories}

Let $\mathcal{C}$ be a finite tensor category. We have a functor $\mathcal{C}^{\op} \times \mathcal{C} \to \mathcal{C}$ given by $(X, Y) \mapsto X^* \otimes Y$. After technical arguments, we see that the coend of this functor,
\begin{equation*}
  \mathbb{F}_{\mathcal{C}} = \int^{X \in \mathcal{C}} X^* \otimes X,
\end{equation*}
exists. We write $\mathbb{F} = \mathbb{F}_{\mathcal{C}}$ if there is no confusion.

The coend $\mathbb{F}$ is a coalgebra in $\mathcal{C}$ by the structure morphisms defined by using the universal property. If $\mathcal{C}$ has a braiding $\sigma_{X,Y}: X \otimes Y \to Y \otimes X$ ($X, Y \in \mathcal{C}$), then, with the help of the Fubini theorem for coends, one can make the coalgebra $\mathbb{F}$ a Hopf algebra in $\mathcal{C}$. Instead of giving the detailed description of the structure morphisms of $\mathbb{F}$, we remark that the construction of $\mathbb{F}$ can be regarded as a part of Tannaka theory: We construct a Hopf algebra from a fiber functor in Tannaka theory, but this construction works for more general class of tensor functors. The Hopf algebra $\mathbb{F}$ can be obtained by applying such a general construction to $\id_{\mathcal{C}}$.

\begin{definition}
  \label{def:br-FTC-non-degen}
  Let $\mathcal{C}$ be a braided finite tensor category with braiding $\sigma$, and let $\mathbb{F}$ be as above. We define $\omega: \mathbb{F} \otimes \mathbb{F} \to \mathbb{F}$ to be the unique morphism such that, for all objects $X, Y \in \mathcal{C}$, the following equation holds:
  \begin{equation*}
    \omega \circ (i_X \otimes \iota_Y) = (\eval_X \otimes \eval_Y)
    \circ (\id_{X^*} \otimes \sigma_{Y^*,X}\sigma_{X,Y^*} \otimes \id_Y).
  \end{equation*}
  The braided finite tensor category $\mathcal{C}$ is {\em non-degenerate} if $\omega$ is non-degenerate in the sense that the composition
  \begin{equation*}
    \mathbb{F} = \mathbb{F} \otimes \unitobj
    \xrightarrow{\quad \id \otimes \coev \quad}
    \mathbb{F} \otimes \mathbb{F} \otimes \mathbb{F}^*
    \xrightarrow{\quad \omega \otimes \id \quad}
    \unitobj \otimes \mathbb{F}^* = \mathbb{F}^*
  \end{equation*}
  is an isomorphism in $\mathcal{C}$. A {\em modular tensor category} is a non-degenerate braided finite tensor category equipped with a twist.
\end{definition}

Lyubashenko has considered a more general setting than the above in his pioneering works \cite{MR1324034,MR1352517,MR1324033,MR1354257}. The above definition coincides with the notion of a {\em perfect modular category} of \cite{MR1354257}. The above definition can be found as \cite[Definition 5.2.7]{MR1862634} in the book of Kerler and Lyubashenko.

Let us clarify the usage of terminology: From now on, the term `modular tensor category' is always used in the above sense. We warn that it is {\em not} trivial that this definition agrees with the definition by the S-matrix in the semisimple case. We will discuss several `non-degeneracy' conditions in the next section. Consequently, a ribbon fusion category with invertible S-matrix is the same thing as a semisimple modular tensor category.

A modular tensor category yields an invariant of closed 3-manifolds and a projective representation of the mapping class group of a surface \cite{MR1324034,MR1352517,MR1324033,MR1354257,MR1862634}. Thus modular tensor categories are an interesting class of categories as a framework to construct topological invariants. Considering the fact that several problems in semisimple Hopf algebras have been solved by using semisimple modular tensor categories, we also expect that modular tensor categories are essential for representation theory of non-semisimple Hopf algebras. 

\subsection{Example: Factorizable Hopf algebras}

We examine what the definition of a modular tensor category says in the case where $\mathcal{C} = \Rep(H)$ for some finite-dimensional Hopf algebra $H$. If this is the case, then $\mathbb{F} = H^*$ (the dual space of $H$) as a vector space. The action $\triangleright$ of $H$ on $\mathbb{F}$ is given by
\begin{equation*}
  (h \triangleright f)(h') = f(S(h_{(1)}) h' h_{(2)})
\end{equation*}
for $h, h' \in H$ and $f \in H^*$, where $S$ is the antipode of $H$ and $h_{(1)} \otimes h_{(2)}$ is the comultiplication of $h$ in the Sweedler notation. The comultiplication and the counit of $\mathbb{F}$ are the same structure morphisms as the dual Hopf algebra of $H$.

Now we suppose that $H$ has a universal R-matrix $R \in H \otimes H$ so that $\mathcal{C}$ is a braided finite tensor category. The Hopf algebra structure of $\mathbb{F}$ and the paring $\omega$ are described explicitly in \cite[7.4.6]{MR1862634}. According, $\omega$ is given by
\begin{equation*}
  \omega(f \otimes g) = \sum_{i, j} f(b_{i} a_{j}) g(S(a_{i} b_{j})),
\end{equation*}
where $R = \sum_i a_i \otimes b_i$. Thus the braided finite tensor category $\mathcal{C}$ is non-degenerate if and only if the quasitriangular Hopf algebra $(H, R)$ is factorizable in the sense of \cite{MR1075721,MR1825894}. Summarizing the above argument, the notion of a modular tensor category may be thought of as a categorical generalization of a ribbon factorizable Hopf algebra.

\subsection{Properties of the coend $\mathbb{F}$}
\label{subsec:coend-F-property}

Let $\mathcal{C}$ be a finite tensor category, which may not be braided. We collect several basic properties of the coend $\mathbb{F}$.

\subsubsection{The coend $\mathbb{F}$ and the Drinfeld center}

We recall that the {\em Drinfeld center} of $\mathcal{C}$ is the category $\mathcal{Z}(\mathcal{C})$ defined as follows: An object of this category is a pair $(V, \xi)$ consisting of an object $V \in \mathcal{C}$ and a natural transformation
\begin{equation*}
  \xi_X: V \otimes X \to X \otimes V \quad (X \in \mathcal{C}),
\end{equation*}
called the {\em half-braiding}, such that the equations
\begin{equation*}
  \xi_{\unitobj} = \id_{V}
  \quad \text{and} \quad
  \xi_{X \otimes Y} = (\id_X \otimes \xi_Y) \circ (\xi_X \otimes \id_Y)
\end{equation*}
hold for all objects $X, Y \in \mathcal{C}$ (it is worth to note that the rigidity of $\mathcal{C}$ implies that $\xi$ is invertible). A morphism $(V, \xi) \to (W, \zeta)$ in $\mathcal{Z}(\mathcal{C})$ is a morphism from $V$ to $W$ in $\mathcal{C}$ compatible with the half-braidings $\xi$ and $\zeta$.

The Drinfeld center $\mathcal{Z}(\mathcal{C})$ closely relates to the coend $\mathbb{F}$ as follows: It is known that the forgetful functor $U: \mathcal{Z}(\mathcal{C}) \to \mathcal{C}$, given by $(V, \xi) \mapsto V$, has a left adjoint, say $L: \mathcal{C} \to \mathcal{Z}(\mathcal{C})$. It is also known that the functor
\begin{equation*}
  Z: \mathcal{C} \to \mathcal{C},
  \quad V \mapsto \int^{X \in \mathcal{C}} X^* \otimes V \otimes X
\end{equation*}
has a structure of a Hopf monad \cite{MR2355605,MR2869176,MR2793022} such that the category of $Z$-modules is isomorphic to $\mathcal{Z}(\mathcal{C})$. If we identify these categories, then $L$ is just the free $Z$-module functor. Hence we have
\begin{equation}
  \label{eq:F-U-L-1}
  \mathbb{F} = Z(\unitobj) = U L(\unitobj).
\end{equation}

\subsubsection{The adjoint algebra}

Let $U$ and $L$ be as above. It is known that $U$ also has a right adjoint, say $R$. We call $\mathbb{A} := U R(\unitobj)$ the {\em adjoint algebra} of $\mathcal{C}$ \cite{MR3631720}, since it generalizes the adjoint representation of a Hopf algebra. By \cite[Lemma 3.5]{MR2869176}, there are natural isomorphisms $L(V^*) \cong R(V)^*$ and $R(V^*) \cong L(V)^*$. Hence we have $\mathbb{F} \cong \mathbb{A}^*$ and $\mathbb{A} \cong \mathbb{F}^*$. In this sense, a result on the adjoint algebra can be translated into a result on the coend $\mathbb{F}$, and vice versa.

The adjoint algebra is more useful than $\mathbb{F}$ when we consider applications to the representation theory of Hopf algebras. For this reason, the author have formulated most of the results of \cite{MR3631720} in terms of the adjoint algebra. Thus, to apply results of \cite{MR3631720} to modular tensor categories, one may need to interpret them by the fact that the coend $\mathbb{F}$ is dual to $\mathbb{A}$.

\subsubsection{The space $\Hom_{\mathcal{C}}(\mathbb{F}, \unitobj)$}

Since $\mathbb{F}$ is a coalgebra in $\mathcal{C}$, the space $\Hom_{\mathcal{C}}(\mathbb{F}, \unitobj)$ is a $k$-algebra by the multiplication given by $a \star b := (a \otimes b) \circ \Delta$. Every object $X \in \mathcal{C}$ has a canonical right coaction $\rho_X: X \to X \otimes \mathbb{F}$ given by
\begin{equation}
  \label{eq:can-coact}
  \rho_X = \Big(
  X \xrightarrow{\quad \coev \otimes \id \quad} X \otimes X^* \otimes X
  \xrightarrow{\quad \id \otimes \iota_X \quad} X \otimes \mathbb{F} \Big),
\end{equation}
where $\iota_X: X^* \otimes X \to \mathbb{F}$ is the universal dinatural transformation. Given a morphism $a: \mathbb{F} \to \unitobj$, we define a natural transformation $\widetilde{a}: \id_{\mathcal{C}} \to \id_{\mathcal{C}}$ by
\begin{equation*}
  \widetilde{a}_X = (\id_X \otimes a) \circ \rho_X \quad (X \in \mathcal{C}).
\end{equation*}
The assignment $a \mapsto \widetilde{a}$ gives an isomorphism
\begin{equation}
  \label{eq:F-End-id}
  \Hom_{\mathcal{C}}(\mathbb{F}, \unitobj) \xrightarrow{\quad \cong \quad} \End(\id_{\mathcal{C}})
\end{equation}
of $k$-algebras; see \cite[Proposition 5.2.5]{MR1862634}.

\subsubsection{The space $\Hom_{\mathcal{C}}(\unitobj, \mathbb{F})$}

The above discussion means that $\Hom_{\mathcal{C}}(\mathbb{F}, \unitobj)$ may be identified with the `center' of an algebra. On the other hand, the vector space 
\begin{equation}
  \label{eq:def-CF}
  \CF(\mathcal{C}) := \Hom_{\mathcal{C}}(\unitobj, \mathbb{F})
\end{equation}
may be viewed as the space of `class functions' on a group. Indeed, if $\mathcal{C} = \Rep(H)$ for some finite-dimensional Hopf algebra $H$, then there is an isomorphism
\begin{equation}
  \label{eq:CF-Hopf}
  \CF(\mathcal{C}) \xrightarrow{\quad \cong \quad}
  C(H) := \{ f \in H^* \mid \text{$f(a b) = f(b S^2(a))$ for all $a, b \in H$} \}
\end{equation}
of vector spaces. We note that $C(H)$ is a $k$-algebra as a subalgebra of $H^*$. For an arbitrary finite tensor category $\mathcal{C}$, we have an isomorphism
\begin{equation*}
  \CF(\mathcal{C}) = \Hom_{\mathcal{C}}(\unitobj, \mathbb{F})
  = \Hom_{\mathcal{C}}(\unitobj, U L(\unitobj))
  \cong \Hom_{\mathcal{C}}(L(\unitobj), L(\unitobj))
\end{equation*}
of vector spaces by \eqref{eq:F-U-L-1}. Hence one can endow $\CF(\mathcal{C})$ with a structure of an algebra via this isomorphism; see \cite{MR3631720}, where the corresponding result for the adjoint algebra $\mathbb{A} = \mathbb{F}^*$ is explained in detail.

\subsection{Properties of the paring $\omega$}

We assume that $\mathcal{C}$ has a braiding. We have introduced the paring $\omega: \mathbb{F} \otimes \mathbb{F} \to \mathbb{F}$ to define the non-degeneracy of $\mathcal{C}$. This is in fact a {\em Hopf paring} in the sense that it is compatible with the structure morphisms of $\mathbb{F}$ in a certain way. Keeping this fact in mind, we consider the map
\begin{equation}
  \label{eq:br-FTC-Dri-map}
  \Phi_{\mathcal{C}}: \Hom_{\mathcal{C}}(\unitobj, \mathbb{F}) \to \Hom_{\mathcal{C}}(\mathbb{F}, \unitobj),
  \quad \chi \mapsto \omega \circ (\chi \otimes \id_{\mathbb{F}}).
\end{equation}
As we have seen, $\Hom_{\mathcal{C}}(\unitobj, \mathbb{F})$ and $\Hom_{\mathcal{C}}(\mathbb{F}, \unitobj)$ are $k$-algebras. Moreover, the multiplication $\star$ of $\Hom_{\mathcal{C}}(\unitobj, \mathbb{F})$ is given by $f \star g = m \circ (f \otimes g)$ under our assumption that $\mathcal{C}$ is braided \cite{MR3631720}, where $m$ is the multiplication of $\mathbb{F}$. The fact that $\omega$ is a Hopf paring implies that the map $\Phi_{\mathcal{C}}$ is in fact a homomorphism of $k$-algebras.

\section{Characterizations of non-degeneracy}
\label{sec:characterizations}

\subsection{Non-degeneracy conditions}

There are several characterizations of semisimple modularity of ribbon fusion categories; see, {\it e.g.}, \cite{MR1741269,MR1966525,MR2609644,MR3242743}. It is natural to attempt to extend these results to the non-semisimple setting. In this section, we introduce alternative `non-degeneracy' conditions for a braided finite tensor category (Definitions~\ref{def:br-FTC-factorizable},~\ref{def:br-FTC-Mueger} and~\ref{def:br-FTC-Dri-map}), and then give an outline of the proof of the equivalence of these conditions.

We first recall Schneider's characterization of factorizability: Let $H$ be a finite-dimensional quasitriangular Hopf algebra. The universal R-matrix $R$ of $H$ defines a Drinfeld twist $F$ of $H \otimes H$, and hence we obtain a new Hopf algebra $(H \otimes H)^{F}$ by twisting the comultiplication of $H \otimes H$ by $F$. Schneider introduced a Hopf algebra map $\delta_R: D(H) \to (H \otimes H)^F$ given in terms of the universal R-matrix, where $D(H)$ is the Drinfeld double of $H$, and then showed that $(H, R)$ is factorizable if and only if the map $\delta_R$ is bijective \cite[Theorem 4.3]{MR1825894}.

This result can be rephrased that $(H, R)$ is factorizable if and only if a certain functor $\Rep((H \otimes H)^F) \to \Rep(D(H))$ defined in terms of the braiding of $\Rep(H)$ is an equivalence. We note that there are equivalences
\begin{equation*}
  \Rep((H \otimes H)^F)
  \approx \Rep(H) \boxtimes \Rep(H)
  \quad \text{and} \quad
  \Rep(D(H))
  \approx \mathcal{Z}(\Rep(H))
\end{equation*}
of tensor categories, where $\boxtimes$ means the Deligne tensor product. Based on this observation, we have the following alternative `non-degeneracy' condition for braided finite tensor categories:

\begin{definition}[Etingof-Nikshych-Ostrik \cite{MR2097289}]
  \label{def:br-FTC-factorizable}
  Let $\mathcal{C}$ be a braided finite tensor category with braiding $\sigma$. We define $G: \mathcal{C} \boxtimes \mathcal{C} \to \mathcal{Z}(\mathcal{C})$ by the composition
  \begin{equation*}
    \mathcal{C} \boxtimes \mathcal{C} \xrightarrow{\quad T_{+} \boxtimes T_{-} \quad}
    \mathcal{Z}(\mathcal{C}) \boxtimes \mathcal{Z}(\mathcal{C})
    \xrightarrow{\quad \text{the tensor product} \quad} \mathcal{Z}(\mathcal{C}),
  \end{equation*}
  where $T_{+}(X) = (X, \sigma_{X,\, ?})$ and $T_{-}(X) = (X, \sigma_{?,X}^{-1})$. We say that $\mathcal{C}$ is {\em factorizable} if the functor $G$ is an equivalence.
\end{definition}

Another `non-degeneracy' condition is due to M\"uger \cite{MR1966524,MR1966525}.

\begin{definition}
  \label{def:br-FTC-Mueger}
  Let $\mathcal{C}$ be a braided finite tensor category with braiding $\sigma$. An object $T \in \mathcal{C}$ is {\em transparent} if $\sigma_{X,T}\sigma_{T,X} = \id_{T \otimes X}$ for all objects $X \in \mathcal{C}$. We define the {\em M\"uger center} of $\mathcal{C}$ to be the full subcategory $\mathcal{C}' \subset \mathcal{C}$ consisting of all transparent objects of $\mathcal{C}$. We say that {\em the M\"uger center of $\mathcal{C}$ is trivial} if every object of $\mathcal{C}'$ is isomorphic to the direct sum of finitely many copies of the tensor unit of $\mathcal{C}$.
\end{definition}

The next condition for a braided finite tensor category closely relates to the invertibility of the S-matrix of a ribbon fusion category.

\begin{definition}
  \label{def:br-FTC-Dri-map}
  Let $\mathcal{C}$ be a braided finite tensor category. We say that $\mathcal{C}$ is {\em weakly-factorizable} if the map $\Phi_{\mathcal{C}}$, given by~\eqref{eq:br-FTC-Dri-map}, is bijective ({\it cf}. Takeuchi \cite{MR1850651}; see also Remark~\ref{rem:w-factor} below).
\end{definition}

If $\mathcal{C}$ is a ribbon fusion category, then the map $\Phi_{\mathcal{C}}$ is represented by the S-matrix of $\mathcal{C}$ with respect to certain bases: Let $\{ V_i \}_{i \in I}$ be a complete set of representatives of isomorphism classes of simple objects of $\mathcal{C}$. By the semisimplicity, we may assume that $\mathbb{F} = \bigoplus_{i \in I} V_i^* \otimes V_i$ as an object of $\mathcal{C}$. For each $i \in I$, we define morphisms $\chi_i: \unitobj \to \mathbb{F}$ and $e_i: \mathbb{F} \to \unitobj$ in $\mathcal{C}$ by the composition
\begin{equation*}
  \newcommand{\xarr}[1]{\xrightarrow{\makebox[5em]{$\scriptstyle #1$}}}
  \begin{aligned}
    \chi_i & = \Big( \unitobj
    \xarr{\coev'} V_i^* \otimes V_i
    \xarr{\text{inclusion}} \mathbb{F} \Big), \\
    e_i & = \Big( \mathbb{F}
    \xarr{\text{projection}} V_i^* \otimes V_i
    \xarr{\eval} \unitobj \Big),
  \end{aligned}
\end{equation*}
where $\coev'_X = (\id_{X^*} \otimes \theta_X) \circ \sigma_{X,X^*} \circ \coev_X$. Then we have
\begin{equation*}
  \Phi_{\mathcal{C}}(\chi_i) = \sum_{j \in I} \frac{s_{i j}}{s_{0 j}} \cdot e_j
\end{equation*}
where $S = (s_{i j})$ is the S-matrix of $\mathcal{C}$; see, {\it e.g.}, \cite{MR1797619,MR1850651}. Thus the S-matrix of $\mathcal{C}$ is invertible if and only if the linear map $\Phi_{\mathcal{C}}$ is invertible.

\begin{theorem}[Shimizu \cite{2016arXiv160206534S}]
  \label{thm:non-degen}
  For a braided finite tensor category $\mathcal{C}$, the following conditions are equivalent:
  \begin{enumerate}
  \item $\mathcal{C}$ is non-degenerate in the sense of Definition~\ref{def:br-FTC-non-degen}.
  \item $\mathcal{C}$ is factorizable in the sense of Definition~\ref{def:br-FTC-factorizable}.
  \item The M\"uger center of $\mathcal{C}$ is trivial in the sense of Definition~\ref{def:br-FTC-Mueger}.
  \item $\mathcal{C}$ is weakly-factorizable in the sense of Definition~\ref{def:br-FTC-Dri-map}.
  \item The map $\Phi_{\mathcal{C}}$ is injective.
  \end{enumerate}
\end{theorem}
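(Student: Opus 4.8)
The plan is to prove the five conditions equivalent by establishing the cycle
\[
(1) \Rightarrow (4) \Rightarrow (5) \Rightarrow (3) \Rightarrow (2) \Rightarrow (1),
\]
so that each condition is reached exactly once. Two of these links are essentially formal. The implication $(4) \Rightarrow (5)$ is immediate, since a bijective map is injective. For $(1) \Rightarrow (4)$, I would first observe that the pairing $\omega \colon \mathbb{F} \otimes \mathbb{F} \to \unitobj$ induces a morphism $\bar{\omega} \colon \mathbb{F} \to \mathbb{F}^*$ --- this is exactly the composition appearing in Definition~\ref{def:br-FTC-non-degen} --- and then check, by unwinding the definitions, that $\Phi_{\mathcal{C}}$ coincides with the map $\Hom_{\mathcal{C}}(\unitobj, \mathbb{F}) \to \Hom_{\mathcal{C}}(\unitobj, \mathbb{F}^*) \cong \Hom_{\mathcal{C}}(\mathbb{F}, \unitobj)$ obtained by post-composing with $\bar{\omega}$, the last isomorphism being the duality between $\Hom_{\mathcal{C}}(\mathbb{F}, \unitobj)$ and $\Hom_{\mathcal{C}}(\unitobj, \mathbb{F}^*)$. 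If $\bar{\omega}$ is an isomorphism, this induced map is a bijection, which is condition $(4)$.

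The first substantial step is $(5) \Rightarrow (3)$, which I would prove by contraposition. Given a transparent object that is not a direct sum of copies of $\unitobj$, I would pass to a simple transparent object $T_0 \not\cong \unitobj$ (the M\"uger center is closed under subquotients) and use it to produce a nonzero element of $\ker \Phi_{\mathcal{C}}$. The mechanism is that $\omega$ is built from the double braiding $\sigma_{Y^*, X}\sigma_{X, Y^*}$, and transparency of $T_0$ makes every double braiding involving it trivial; consequently the class functions coming from $\mathcal{C}'$ all pair through $\omega$ as if no braiding were present, so that their images under $\Phi_{\mathcal{C}}$ collapse. Concretely, I expect $\dim_k \CF(\mathcal{C}') \ge 2$ to force a nontrivial kernel, whereas a trivial M\"uger center gives $\CF(\mathcal{C}') \cong k$. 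Injectivity of $\Phi_{\mathcal{C}}$ thus forces $\mathcal{C}'$ to have $\unitobj$ as its only simple object, which is condition $(3)$.

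For $(2) \Rightarrow (1)$, I would transport the canonical structures along the equivalence $G$. Using $\mathbb{F} = U L(\unitobj) = Z(\unitobj)$ from~\eqref{eq:F-U-L-1} together with the Fubini theorem for coends, I would identify the coend $\mathbb{F}_{\mathcal{C}}$ and the pairing $\omega$ with data arising from $\mathcal{Z}(\mathcal{C})$; once $G$ is an equivalence, the induced equivalence $\mathcal{C} \boxtimes \mathcal{C} \simeq \mathcal{Z}(\mathcal{C})$ matches $\bar{\omega}$ with a manifestly invertible evaluation morphism, yielding non-degeneracy. This is largely coend bookkeeping, and I do not expect conceptual difficulty here.

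The main obstacle is $(3) \Rightarrow (2)$: upgrading the concrete condition that the M\"uger center is trivial to the statement that the braided tensor functor $G \colon \mathcal{C} \boxtimes \mathcal{C} \to \mathcal{Z}(\mathcal{C})$ is an equivalence. My approach would be dimensional: since $\FPdim(\mathcal{C} \boxtimes \mathcal{C}) = \FPdim(\mathcal{C})^2 = \FPdim(\mathcal{Z}(\mathcal{C}))$, it suffices to show that $G$ is fully faithful, because a fully faithful tensor functor between finite tensor categories of equal Frobenius--Perron dimension is automatically an equivalence. To prove full faithfulness I would compute $\Hom_{\mathcal{Z}(\mathcal{C})}(G(-), G(-))$ via the adjunctions relating $U$, $L$ and the coend, and show that the discrepancy between it and $\Hom_{\mathcal{C} \boxtimes \mathcal{C}}(-, -)$ is measured precisely by transparent objects, so that triviality of the M\"uger center makes $G$ fully faithful. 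This step carries the genuinely non-semisimple content of the theorem --- in the fusion case it reduces to M\"uger's classical argument --- and the delicate point is the interplay of the full-faithfulness criterion with the dimension count, both of which require the structure theory of finite tensor categories rather than a direct computation.
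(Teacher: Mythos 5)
Your cycle $(1)\Rightarrow(4)\Rightarrow(5)\Rightarrow(3)\Rightarrow(2)\Rightarrow(1)$ is a reasonable architecture and the two formal links match the paper, but the substantive steps have problems, one of which is a genuine failure. In $(5)\Rightarrow(3)$ you reduce to a \emph{simple} transparent object $T_0 \not\cong \unitobj$, justified by closure of $\mathcal{C}'$ under subquotients. This reduction is invalid: an object that is not a direct sum of copies of $\unitobj$ may have all of its composition factors isomorphic to $\unitobj$, so a nontrivial M\"uger center can contain no simple object other than $\unitobj$ --- it is then non-semisimple. This case really occurs within the theorem's hypotheses (the theorem is over an arbitrary algebraically closed field): if $\mathrm{char}(k)=p$, the symmetric category $\Rep(k[\mathbb{Z}/p\mathbb{Z}]) \cong \Rep(k[x]/(x^p))$ is non-semisimple, has $\unitobj$ as its unique simple object, and is its own M\"uger center. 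In that situation your source of two linearly independent class functions (the characters of $\unitobj$ and of $T_0$) disappears, and the inequality $\dim_k \CF(\mathcal{C}') \ge 2$ is exactly what remains to be proved. This is where the paper's real work lies: after showing (as you do, modulo an explicit appeal to the monicity of $\phi_{\mathcal{C}'}\colon \mathbb{F}_{\mathcal{C}'} \to \mathbb{F}_{\mathcal{C}}$, Lemma~\ref{lem:full-sub-cat-monic}) that injectivity of $\Phi_{\mathcal{C}}$ forces $j(\mathcal{C}') := \dim_k\Hom_{\mathcal{C}}(\unitobj,\mathbb{F}_{\mathcal{C}'}) = 1$, the paper proves the lemma that $j(\mathcal{A})=1$ implies $\mathcal{A}\approx\Vect$: linear independence of internal characters gives that every simple is $\unitobj$; hence $\mathcal{A}$ is unimodular, so $\dim_k\End(\id_{\mathcal{A}}) = \dim_k\Hom_{\mathcal{A}}(\unitobj,\mathbb{F}_{\mathcal{A}}) = 1$; then the categorical integral $\Lambda \in \End(\id_{\mathcal{A}})$, being a nonzero multiple of the identity, cannot be nilpotent, and the categorical Maschke theorem forces semisimplicity. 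Without this (or an equivalent) argument your $(5)\Rightarrow(3)$ does not close.

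In $(3)\Rightarrow(2)$ your shape of argument (equal Frobenius--Perron dimensions plus full faithfulness of $G$) is legitimate, but the assertion that ``the discrepancy between $\Hom_{\mathcal{Z}(\mathcal{C})}(G(-),G(-))$ and $\Hom_{\mathcal{C}\boxtimes\mathcal{C}}(-,-)$ is measured precisely by transparent objects'' is the whole content of the step, and in the non-semisimple setting it is not a routine computation (one cannot decompose along simples). The paper instead proves \emph{dominance}, via the key lemma $\FPdim(\mathcal{X}\vee\mathcal{Y})\,\FPdim(\mathcal{X}\cap\mathcal{Y}) = \FPdim(\mathcal{X})\,\FPdim(\mathcal{Y})$ for tensor full subcategories of a braided finite tensor category: taking $\mathcal{X}=\Img(T_+)$ and $\mathcal{Y}=\Img(T_-)$, so that $\mathcal{X}\cap\mathcal{Y}=\mathcal{C}'$ and $\mathcal{X}\vee\mathcal{Y}=\Img(G)$, triviality of $\mathcal{C}'$ yields $\FPdim(\Img(G)) = \FPdim(\mathcal{C})^2 = \FPdim(\mathcal{Z}(\mathcal{C}))$, whence $G$ is dominant and therefore an equivalence by the FP-dimension criterion. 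Some such lemma is the missing ingredient in your sketch. Finally, your $(2)\Rightarrow(1)$ is heavier than ``coend bookkeeping'': the paper identifies $G$ with the composite of Lyubashenko's equivalence $\mathcal{C}\boxtimes\mathcal{C} \to \mathcal{C}^{\mathbb{F}}$, the functor $\omega^{\natural}$, and Majid's equivalence $\mathcal{C}_{\mathbb{F}} \to \mathcal{Z}(\mathcal{C})$, and then uses that $\omega^{\natural}$ is an equivalence if and only if the Hopf pairing $\omega$ is non-degenerate; these identifications are what actually make the implication (indeed, both directions of $(1)\Leftrightarrow(2)$) work.
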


It has been known that these conditions are equivalent under the assumption that $\mathcal{C}$ is semisimple; see \cite{MR1741269,MR1966525,MR2609644,MR3242743}. Thus we have obtained a non-semisimple generalization of a well-known characterization of semisimple modular tensor categories.

\begin{remark}
  Suppose that $\mathcal{C} = \Rep(H)$ for some finite-dimensional quasitriangular Hopf algebra $H$. As we have recalled in Subsection~\ref{subsec:coend-F-property}, we may identify
  \begin{equation*}
    \Hom_{\mathcal{C}}(\unitobj, \mathbb{F}) = C(H)
    \quad \text{and} \quad
    \Hom_{\mathcal{C}}(\mathbb{F}, \unitobj) = Z(H),
  \end{equation*}
  where $C(H)$ is given in~\eqref{eq:CF-Hopf} and $Z(H)$ is the center of $H$. By this identification, we see that $H$ is weakly-factorizable in the sense of Takeuchi \cite{MR1850651} if and only if the map $\Phi_{\mathcal{C}}$ is bijective. The above theorem implies that $H$ is weakly-factorizable if and only if it is factorizable.
\end{remark}

\begin{remark}
  \label{rem:w-factor}
  Let $\mathcal{C}$ be a braided finite tensor category. We remark that the surjectivity of $\Phi_{\mathcal{C}}$ does not imply the non-degeneracy. An example is given by Sweedler's 4-dimensional Hopf algebra; see \cite[Remark 5.4]{2016arXiv160206534S}.

  For a finite tensor category $\mathcal{A}$, the {\em distinguished invertible object} $\rho \in \mathcal{A}$ \cite{MR2119143} is defined to be the socle of $P_0^*$, where $P_0$ is the projective cover of $\unitobj \in \mathcal{A}$. This object generalizes the modular function on a finite-dimensional Hopf algebra, and thus we say that $\mathcal{A}$ is {\em unimodular} if $\rho \cong \unitobj$. If $\mathcal{A}$ is unimodular, then the equation
  \begin{equation*}
    \dim_k \Hom_{\mathcal{A}}(\mathbb{F}_{\mathcal{A}}, \unitobj)
    = \dim_k \Hom_{\mathcal{A}}(\unitobj, \mathbb{F}_{\mathcal{A}})
  \end{equation*}
  holds \cite{MR3631720}. Hence, for a unimodular braided finite tensor category $\mathcal{C}$, the conditions of Theorem~\ref{thm:non-degen} are also equivalent to that the map $\Phi_{\mathcal{C}}$ is surjective.
\end{remark}

\subsection{Non-degeneracy and factorizability} 

We sketch the proof of the equivalence (1) $\Leftrightarrow$ (2) of Theorem~\ref{thm:non-degen}. Let $\mathcal{C}$ be a braided finite tensor category, and let $\mathbb{F}$ be the Hopf algebra used to define the non-degeneracy of $\mathcal{C}$. There are the category $\mathcal{C}_{\mathbb{F}}$ of right $\mathbb{F}$-modules and the category $\mathcal{C}^{\mathbb{F}}$ of right $\mathbb{F}$-comodules. A right $\mathbb{F}$-comodule $V$ is also a right $\mathbb{F}$-module by the action given by
\begin{equation*}
  V \otimes \mathbb{F}
  \xrightarrow{\quad \text{coaction} \otimes \id \quad}
  V \otimes \mathbb{F} \otimes \mathbb{F}
  \xrightarrow{\quad \id \otimes \omega \quad} V \otimes \unitobj = V,
\end{equation*}
since the paring $\omega: \mathbb{F} \otimes \mathbb{F} \to \mathbb{F}$ is in fact a Hopf paring. This correspondence defines a functor from $\mathcal{C}^{\mathbb{F}}$ to $\mathcal{C}_{\mathbb{F}}$, which we denote by $\omega^{\natural}: \mathcal{C}^{\mathbb{F}} \to \mathcal{C}_{\mathbb{F}}$. We note that $\mathcal{C}$ is non-degenerate if and only if $\omega^{\natural}$ is an equivalence.

Lyubashenko showed that the functor
\begin{equation}
  \label{eq:equiv-Lyu}
  \mathcal{C} \boxtimes \mathcal{C} \to \mathcal{C}^{\mathbb{F}},
  \quad W \boxtimes X \mapsto (W \otimes X, \id_W \otimes \rho_X)
\end{equation}
is an equivalence of categories, where $\rho_X: X \to X \otimes \mathbb{F}$ is the canonical coaction given by~\eqref{eq:can-coact}. On the other hand, the category $\mathcal{C}^{\mathbb{F}}$ has the following description: Let $M$ be a right $\mathbb{F}$-module with action $a_M: M \otimes \mathbb{F} \to M$. We define the natural transformation $\xi: M \otimes \id_{\mathcal{C}} \to \id_{\mathcal{C}} \otimes M$ by the composition
\begin{equation*}
  \newcommand{\xarr}[1]{\xrightarrow{\makebox[5em]{$\scriptstyle #1$}}}
  \begin{aligned}
    \xi_X = \Big(
    M \otimes X
    & \xarr{\coev \otimes \id \otimes \id} X \otimes X^* \otimes M \otimes X \\[-3pt]
    & \xarr{\id \otimes \sigma \otimes \id} X \otimes M \otimes X^* \otimes X \\[-3pt]
    & \xarr{\id \otimes \id \otimes \iota_X} X \otimes M \otimes \mathbb{F}
    \xarr{\id \otimes a_M} X \otimes M \Big)
  \end{aligned}
\end{equation*}
for an object $X \in \mathcal{C}$. As Majid showed \cite[Theorem 3.2]{MR1192156}, the functor
\begin{equation}
  \label{eq:equiv-Maj}
  \mathcal{C}_{\mathbb{F}} \to \mathcal{Z}(\mathcal{C}),
  \quad (M, a_M) \mapsto (M, \xi)
\end{equation}
is an equivalence of categories. It turns out that the composition
\begin{equation*}
  \mathcal{C} \boxtimes \mathcal{C}
  \xrightarrow[\approx]{\quad \text{\eqref{eq:equiv-Lyu}} \quad}
  \mathcal{C}^{\mathbb{F}}
  \xrightarrow{\quad \omega^{\natural} \quad}
  \mathcal{C}_{\mathbb{F}}
  \xrightarrow[\approx]{\quad \text{\eqref{eq:equiv-Maj}} \quad}
  \mathcal{Z}(\mathcal{C}).
\end{equation*}
is isomorphic to the functor $G: \mathcal{C} \boxtimes \mathcal{C} \to \mathcal{Z}(\mathcal{C})$ of Definition~\ref{def:br-FTC-factorizable}. Hence $G$ is an equivalence if and only if $\omega^{\natural}$ is. This proves (1) $\Leftrightarrow$ (2) of Theorem~\ref{thm:non-degen}.

\subsection{Factorizability and the M\"uger center}

The proof of (2) $\Leftrightarrow$ (3) of Theorem~\ref{thm:non-degen} essentially depends on the theory of {\em Frobenius-Perron dimensions} for finite tensor categories. Let $\mathcal{C}$ be a finite tensor category, and let $\Gr(\mathcal{C})$ be the Grothendieck ring of $\mathcal{C}$. The {\em Frobenius-Perron dimension} is a unique ring homomorphism $\FPdim: \Gr(\mathcal{C}) \to \mathbb{R}$ such that $\FPdim(X) > 0$ for all non-zero objects $X \in \mathcal{C}$. We then define the Frobenius-Perron dimension of $\mathcal{C}$ by
\begin{equation*}
  \FPdim(\mathcal{C}) := \sum_{i \in I} \FPdim(P_i) \FPdim(V_i),
\end{equation*}
where $\{ V_i \}_{i \in I}$ is a complete set of representatives of isomorphism classes of simple objects of $\mathcal{C}$ and $P_i$ is a projective cover of $V_i$. See \cite{MR2119143} for details.

\begin{example}
  If $\mathcal{C} = \Rep(H)$ for some finite-dimensional (quasi-)Hopf algebra $H$, then $\FPdim(X) = \dim_k(X)$ for all objects $X \in \mathcal{C}$. By the representation theory of finite-dimensional algebras, we have $\FPdim(\mathcal{C}) = \dim_k(H)$.
\end{example}

Let $\mathcal{C}$ and $\mathcal{D}$ be finite tensor categories, and let $F: \mathcal{C} \to \mathcal{D}$ be a tensor functor ($=$ a $k$-linear exact strong monoidal functor). The {\em image} of $F$, $\Img(F)$, is the full subcategory of $\mathcal{D}$ consisting of all subquotients of objects of the form $F(X)$ for some $X \in \mathcal{C}$. We note that $\Img(F)$ is a tensor full subcategory of $\mathcal{D}$. The functor $F$ is said to be {\em dominant} (or {\em surjective} \cite{MR3242743}) if $\Img(F) = \mathcal{D}$. We now collect from \cite[Section 6.3]{MR3242743} basic results on the Frobenius-Perron dimension:
\begin{enumerate}
\item $F$ is fully faithful if and only if $\FPdim(\Img(F)) = \FPdim(\mathcal{C})$
\item $F$ is dominant if and only if $\FPdim(\Img(F)) = \FPdim(\mathcal{D})$.
\item Suppose that $F$ is either fully faithful or dominant. Then $F$ is an equivalence if and only if $\FPdim(\mathcal{C}) = \FPdim(\mathcal{D})$.
\end{enumerate}
Combining these results, we have the following criterion:
\begin{equation}
  \label{eq:equiv-criterion}
  \text{$F$ is an equivalence}
  \iff \FPdim(\mathcal{D}) = \FPdim(\Img(F)) = \FPdim(\mathcal{C}).
\end{equation}

We would like to apply~\eqref{eq:equiv-criterion} to the functor $G: \mathcal{C} \boxtimes \mathcal{C} \to \mathcal{Z}(\mathcal{C})$ used to define the factorizability. We consider a more general setting: Let $\mathcal{B}$ be a braided finite tensor category, and let $\mathcal{X}$ and $\mathcal{Y}$ be tensor full subcategories of $\mathcal{B}$. It is obvious that the intersection full subcategory $\mathcal{X} \cap \mathcal{Y}$ is a tensor full subcategory of $\mathcal{B}$. By the braiding of $\mathcal{B}$, we endow the functor
\begin{equation*}
  \mathcal{X} \boxtimes \mathcal{Y} \to \mathcal{B}, \quad X \boxtimes Y \mapsto X \otimes Y
\end{equation*}
with a structure of a tensor functor. We denote by $\mathcal{X} \vee \mathcal{Y}$ the image of this tensor functor. As a generalization of \cite[Lemma 8.21.6]{MR3242743} to the non-semisimple setting, we have:

\begin{lemma}[{\cite[Lemma 4.8]{2016arXiv160206534S}}]
  Under the above assumptions, we have
  \begin{equation*}
    \FPdim(\mathcal{X} \vee \mathcal{Y}) \FPdim(\mathcal{X} \cap \mathcal{Y})
    = \FPdim(\mathcal{X}) \FPdim(\mathcal{Y}).
  \end{equation*}
\end{lemma}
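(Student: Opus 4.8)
The plan is to analyze the dominant tensor functor
\begin{equation*}
  T\colon \mathcal{X}\boxtimes\mathcal{Y}\longrightarrow \mathcal{X}\vee\mathcal{Y},
  \qquad X\boxtimes Y\longmapsto X\otimes Y,
\end{equation*}
whose image is $\mathcal{X}\vee\mathcal{Y}$ by definition, through the theory of Frobenius--Perron dimensions that underlies the criterion~\eqref{eq:equiv-criterion}. First I would record the two dimensions that can be read off directly: multiplicativity of the Frobenius--Perron dimension under the Deligne tensor product gives $\FPdim(\mathcal{X}\boxtimes\mathcal{Y}) = \FPdim(\mathcal{X})\FPdim(\mathcal{Y})$, while for a dominant tensor functor between finite tensor categories one has the ``degree'' identity (see \cite[Chapter 6]{MR3242743})
\begin{equation*}
  \FPdim(\mathcal{X}\boxtimes\mathcal{Y}) = \FPdim(\mathcal{X}\vee\mathcal{Y})\cdot\FPdim(A),
  \qquad A := \Psi(\unitobj),
\end{equation*}
where $\Psi$ is the right adjoint of $T$ and $A$ is the resulting algebra in $\mathcal{X}\boxtimes\mathcal{Y}$. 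Granting this, the whole lemma collapses to the single computation $\FPdim(A) = \FPdim(\mathcal{X}\cap\mathcal{Y})$.

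To understand $A$ I would use the adjunction
\begin{equation*}
  \Hom_{\mathcal{X}\boxtimes\mathcal{Y}}(X\boxtimes Y, A)
  \cong \Hom_{\mathcal{B}}(X\otimes Y, \unitobj)
  \cong \Hom_{\mathcal{B}}(X, Y^*),
\end{equation*}
natural in $X\in\mathcal{X}$ and $Y\in\mathcal{Y}$. The crucial structural observation is that every morphism $X\to Y^*$ in $\mathcal{B}$ has image that is at once a quotient of $X\in\mathcal{X}$ and a subobject of $Y^*\in\mathcal{Y}$; since tensor full subcategories are closed under subquotients, this image lies in $\mathcal{X}\cap\mathcal{Y}$. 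Thus $A$ is built out of $\mathcal{X}\cap\mathcal{Y}$ alone. In the semisimple case this is transparent: one finds $A\cong\bigoplus_{Z\in\Irr(\mathcal{X}\cap\mathcal{Y})} Z\boxtimes Z^*$, whence $\FPdim(A)=\sum_{Z}\FPdim(Z)^2=\FPdim(\mathcal{X}\cap\mathcal{Y})$, which is exactly the classical argument behind \cite[Lemma 8.21.6]{MR3242743}. The non-semisimple analogue should be the coend $A\cong\int^{Z\in\mathcal{X}\cap\mathcal{Y}} Z\boxtimes Z^*$, and the target identity becomes the assertion that this coend carries the Frobenius--Perron dimension of the subcategory $\mathcal{X}\cap\mathcal{Y}$.

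The main obstacle is precisely this last step: computing $\FPdim(A)$ without semisimplicity. Here I would pass to multiplicities, using that the projective cover of $U\boxtimes W$ in $\mathcal{X}\boxtimes\mathcal{Y}$ is $P_U\boxtimes P_W$, so that
\begin{equation*}
  [A : U\boxtimes W] = \dim_k\Hom_{\mathcal{X}\boxtimes\mathcal{Y}}(P_U\boxtimes P_W, A)
  = \dim_k\Hom_{\mathcal{B}}(P_U, P_W^*),
\end{equation*}
and then evaluate $\FPdim(A)=\sum_{U,W}[A:U\boxtimes W]\,\FPdim(U)\FPdim(W)$ against $\FPdim(\mathcal{X}\cap\mathcal{Y})=\sum_{Z\in\Irr(\mathcal{X}\cap\mathcal{Y})}\FPdim(P_Z)\FPdim(Z)$. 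Controlling the spaces $\Hom_{\mathcal{B}}(P_U, P_W^*)$ of morphisms between the projectives of $\mathcal{X}$ and of $\mathcal{Y}$ computed inside the ambient category $\mathcal{B}$ is the genuinely non-trivial point where the generalization of \cite[Lemma 8.21.6]{MR3242743} requires real work; I expect to handle it by combining the ``factoring through $\mathcal{X}\cap\mathcal{Y}$'' observation with the projection formula $\Psi T(-)\cong(-)\otimes A$ for the adjunction, thereby reducing everything to the regular object of $\mathcal{X}\cap\mathcal{Y}$. Once $\FPdim(A)=\FPdim(\mathcal{X}\cap\mathcal{Y})$ is established, substituting into the degree identity and dividing by $\FPdim(\mathcal{X}\vee\mathcal{Y})$ yields the claimed equality $\FPdim(\mathcal{X}\vee\mathcal{Y})\FPdim(\mathcal{X}\cap\mathcal{Y})=\FPdim(\mathcal{X})\FPdim(\mathcal{Y})$.
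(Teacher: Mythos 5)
First, a point of calibration: this survey does not actually prove the lemma --- it quotes it from \cite[Lemma 4.8]{2016arXiv160206534S} --- so your proposal can only be measured against what a complete argument needs. Your reduction is the right one and is sound as far as it goes: $\FPdim(\mathcal{X}\boxtimes\mathcal{Y})=\FPdim(\mathcal{X})\FPdim(\mathcal{Y})$, the identity $\FPdim(\mathcal{C})=\FPdim(\mathcal{D})\cdot\FPdim_{\mathcal{C}}(I(\unitobj))$ for a dominant tensor functor with right adjoint $I$ (this is indeed available in \cite[Ch.~6]{MR3242743}), the observation that any morphism $X\to Y^*$ with $X\in\mathcal{X}$, $Y\in\mathcal{Y}$ has image in $\mathcal{I}:=\mathcal{X}\cap\mathcal{Y}$, and the multiplicity formula $[A:U\boxtimes W]=\dim_k\Hom_{\mathcal{B}}(P_U,P_W^*)$ are all correct.

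The problem is that the proposal stops exactly at the step that carries the mathematical content, namely $\FPdim(A)=\FPdim(\mathcal{I})$, and the two devices you offer do not close it. The identification $A\cong\int^{Z\in\mathcal{I}}Z\boxtimes Z^*$ is only asserted; it cannot be taken for granted, since $A$ is characterized by morphisms \emph{into} it while a coend is characterized by morphisms \emph{out of} it, and even if granted, the Frobenius--Perron dimension of that coend in the non-semisimple case is exactly as unknown as that of $A$ (it is not $\bigoplus_Z Z\boxtimes Z^*$; its Grothendieck class involves the projectives of $\mathcal{I}$). Likewise the projection formula $\Psi T(-)\cong(-)\otimes A$ gives no access to the projectives of $\mathcal{I}$, which are the objects entering $\FPdim(\mathcal{I})=\sum_{Z\in\Irr(\mathcal{I})}\FPdim(P_Z^{\mathcal{I}})\FPdim(Z)$ and which a priori have nothing to do with $P_U\in\mathcal{X}$ or $P_W\in\mathcal{Y}$. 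What is missing is the homological bridge between $\mathcal{B}$ and $\mathcal{I}$, for instance: since $\mathcal{I}$ is closed under subquotients and finite direct sums, every object has a maximal quotient lying in $\mathcal{I}$, giving a left adjoint $E$ to the inclusion $\mathcal{I}\hookrightarrow\mathcal{B}$; then $E(P_U)$ is projective in $\mathcal{I}$ (because $\Hom_{\mathcal{X}}(P_U,-)$ is exact on $\mathcal{I}$) with simple top $U$, hence equals the projective cover $P_U^{\mathcal{I}}$ if $U\in\Irr(\mathcal{I})$ and vanishes otherwise; dually the maximal $\mathcal{I}$-subobject of $P_W^*$ is the injective hull $I^{\mathcal{I}}_{W^*}$ or zero. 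This yields
\begin{equation*}
  \dim_k\Hom_{\mathcal{B}}(P_U,P_W^*)
  =\dim_k\Hom_{\mathcal{I}}\bigl(P_U^{\mathcal{I}},I^{\mathcal{I}}_{W^*}\bigr)
  =[P_U^{\mathcal{I}}:W^*],
\end{equation*}
after which your double sum collapses to $\sum_{U\in\Irr(\mathcal{I})}\FPdim(P_U^{\mathcal{I}})\FPdim(U)=\FPdim(\mathcal{I})$. Until an argument of this kind is supplied, the proposal establishes only the (comparatively easy) reduction, not the lemma itself.
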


Now we can explain how to prove (2) $\Leftrightarrow$ (3) of Theorem~\ref{thm:non-degen}. Let $T_{+}$ and $T_{-}$ be as in Definition~\ref{def:br-FTC-factorizable}, and set $\mathcal{C}_{\pm} = \Img(T_{\pm})$. Since $T_{\pm}$ is fully faithful, we have
\begin{equation*}
  \FPdim(\mathcal{C}_{\pm}) = \FPdim(\mathcal{C}).
\end{equation*}
We also have $\mathcal{C}_{+} \vee \mathcal{C}_{-} = \Img(G)$. On the other hand, $\mathcal{C}_{+} \cap \mathcal{C}_{-}$ can be identified with the M\"uger center $\mathcal{C}'$. By \eqref{eq:equiv-criterion} and the well-known formula
\begin{equation*}
  \FPdim(\mathcal{C} \boxtimes \mathcal{C}) = \FPdim(\mathcal{Z}(\mathcal{C})) = \FPdim(\mathcal{C})^2,
\end{equation*}
we have the following logical equivalences:
\begin{align*}
  \text{$G$ is an equivalence}
  & \iff \FPdim(\mathcal{C}_{+} \vee \mathcal{C}_{-}) = \FPdim(\mathcal{C})^2 \\
  & \iff \FPdim(\mathcal{C}_{+} \cap \mathcal{C}_{-}) = 1 \\
  & \iff \FPdim(\mathcal{C}') = 1.
\end{align*}
Hence $\mathcal{C}$ is factorizable if and only if the M\"uger center of $\mathcal{C}$ is trivial.

\subsection{The M\"uger center and the map $\Phi_{\mathcal{C}}$}
\label{subsec:Mug-center}

We have explained the outlines of the proofs of (1) $\Leftrightarrow$ (2) and (2) $\Leftrightarrow$ (3) of Theorem~\ref{thm:non-degen}. The implications (1) $\Rightarrow$ (4) and (4) $\Rightarrow$ (5) are obvious. Thus it is sufficient to show (5) $\Rightarrow$ (3) to complete the proof of the theorem.

The proof of (5) $\Rightarrow$ (3) is based on the internal character theory \cite{MR3631720}, which extends results of the character theory of groups and Hopf algebras to finite tensor categories. Let, in general, $\mathcal{A}$ be a finite tensor category, and let $\mathcal{S}$ be a non-zero full subcategory of $\mathcal{A}$ closed under subquotients and direct sums. Then the coend
\begin{equation*}
  \mathbb{F}_{\mathcal{S}} := \int^{X \in \mathcal{S}} X^* \otimes X
\end{equation*}
exists (as an object of $\mathcal{A}$). By the universal property of this coend, we define the morphism $\phi_{\mathcal{S}}: \mathbb{F}_{\mathcal{S}} \to \mathbb{F}_{\mathcal{A}}$ to be the unique morphism in $\mathcal{A}$ such that $\phi_{\mathcal{S}} \circ \iota^{\mathcal{S}}_X = \iota^{\mathcal{A}}_X$ for all objects $X \in \mathcal{S}$, where $\iota^{\mathcal{A}}_X$ and $\iota^{\mathcal{S}}_X$ are the $X$-th component of the universal dinatural transformations of the coends $\mathbb{F}_{\mathcal{A}}$ and $\mathbb{F}_{\mathcal{S}}$, respectively. A key observation in \cite{MR3631720} is:

\begin{lemma}
  \label{lem:full-sub-cat-monic}
  The morphism $\phi_{\mathcal{S}}: \mathbb{F}_{\mathcal{S}} \to \mathbb{F}_{\mathcal{A}}$ is monic.
\end{lemma}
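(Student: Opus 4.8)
The plan is to reduce the statement to a computation with a projective generator of $\mathcal{S}$ and then to localize the whole question to the simple objects of $\mathcal{S}$. Since $\mathcal{S}$ is closed under subquotients and finite direct sums, it is itself a finite abelian category; I would fix a projective generator $G$ of $\mathcal{S}$ and set $E = \End_{\mathcal{A}}(G) = \End_{\mathcal{S}}(G)$ (the two agree because $\mathcal{S}$ is full). The coend defining $\mathbb{F}_{\mathcal{S}}$ can be computed from $G$ alone: the component $\iota^{\mathcal{S}}_G \colon G^* \otimes G \to \mathbb{F}_{\mathcal{S}}$ is an epimorphism, and $\mathbb{F}_{\mathcal{S}}$ is the cokernel of $\partial = d_1 - d_0 \colon E \otimes_k (G^* \otimes G) \to G^* \otimes G$, where $d_1 = \id_{G^*} \otimes (-)$ and $d_0 = (-)^* \otimes \id_G$ encode the two actions of $E$. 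This is the standard presentation of a coend over a category with a single projective generator; the epimorphism claim I would justify by a dinaturality argument showing $\Img(\iota^{\mathcal{S}}_X) \subseteq \Img(\iota^{\mathcal{S}}_G)$ for every $X \in \mathcal{S}$, since each such $X$ is a quotient of a power of $G$ and tensoring is exact.

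Because $\iota^{\mathcal{A}}$ is dinatural over all of $\mathcal{A}$, it is in particular dinatural with respect to the endomorphisms in $E$, so $\iota^{\mathcal{A}}_G$ coequalizes $d_0$ and $d_1$ and hence factors through the cokernel; the induced factorization is precisely $\phi_{\mathcal{S}}$. As $\iota^{\mathcal{S}}_G$ is epic with $\ker(\iota^{\mathcal{S}}_G) = \Img(\partial)$, and $\phi_{\mathcal{S}} \circ \iota^{\mathcal{S}}_G = \iota^{\mathcal{A}}_G$, the map $\phi_{\mathcal{S}}$ is monic if and only if $\ker(\iota^{\mathcal{A}}_G) = \ker(\iota^{\mathcal{S}}_G)$ inside $G^* \otimes G$. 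The inclusion $\ker(\iota^{\mathcal{S}}_G) \subseteq \ker(\iota^{\mathcal{A}}_G)$ is automatic, so everything comes down to the reverse inclusion: no relation in the ambient coend $\mathbb{F}_{\mathcal{A}}$ collapses $G^* \otimes G$ more than the relations already present inside $\mathcal{S}$.

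To control $\ker(\iota^{\mathcal{A}}_G)$ I would bring in a projective generator $P$ of $\mathcal{A}$, use the analogous presentation $\mathbb{F}_{\mathcal{A}} = \operatorname{coker}(\partial_P)$ with $\ker(\iota^{\mathcal{A}}_P) = \Img(\partial_P)$, and transport relations along an epimorphism $p \colon P^{\oplus n} \twoheadrightarrow G$ via the dinaturality identity $\iota^{\mathcal{A}}_{P^{\oplus n}} \circ (p^* \otimes \id_{P^{\oplus n}}) = \iota^{\mathcal{A}}_G \circ (\id_{G^*} \otimes p)$. A class killed by $\iota^{\mathcal{A}}_G$ lifts through the epimorphism $\id_{G^*} \otimes p$ and is then carried into $\Img(\partial_P)$; the remaining task is to push the resulting $\End_{\mathcal{A}}(P^{\oplus n})$-relations back down to $G$ and recognize them as $E$-relations. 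This is exactly where closure of $\mathcal{S}$ under subquotients is indispensable: every morphism out of $G$ factors through its image, which is a subobject of an object of $\mathcal{S}$ and therefore lies in $\mathcal{S}$, so the ambient relations descend to relations internal to $\mathcal{S}$. Keeping track of this descent — deciding which ambient relations are genuinely new and which are already consequences of $E$-relations — is the main obstacle, and I expect it to be the technical heart of the argument.

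As a conceptual check, and an alternative route for the decisive inclusion, I would exploit that $\mathbb{F}_{\mathcal{S}}$ has finite length: a morphism out of a finite-length object is monic if and only if it is injective on the socle, because a nonzero subobject (in particular $\ker(\phi_{\mathcal{S}})$, were it nonzero) would contain a simple subobject, and every simple subobject lies in the socle. Thus $\phi_{\mathcal{S}}$ is monic if and only if its restriction to $\operatorname{soc}(\mathbb{F}_{\mathcal{S}})$ is injective, which localizes the entire question to simple objects of $\mathcal{S}$. On a simple object $V$ the statement reduces to the assertion that $\iota^{\mathcal{A}}_V$ is monic and that the subobjects $\Img(\iota^{\mathcal{A}}_V)$, as $V$ runs over pairwise non-isomorphic simples, meet only in the expected way — the categorical counterpart of the linear independence of matrix coefficients of non-isomorphic irreducible representations. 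I would establish this base case directly and feed it back to obtain $\ker(\iota^{\mathcal{A}}_G) \subseteq \ker(\iota^{\mathcal{S}}_G)$, completing the proof.
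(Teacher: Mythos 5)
You should know at the outset that the paper contains no proof of this lemma to compare against: it is quoted there as ``a key observation'' from \cite{MR3631720}, so your argument has to stand on its own, and as written it does not, because its decisive step is missing rather than merely deferred. Your reductions are all fine: $\mathcal{S}$ is indeed a finite abelian category (writing $\mathcal{A}\simeq\Rep(A)$, one checks $\mathcal{S}=\Rep(A/J)$ for $J$ the common annihilator of the objects of $\mathcal{S}$), the cokernel presentation of $\mathbb{F}_{\mathcal{S}}$ via a projective generator $G$ of $\mathcal{S}$ is legitimate because $(X,Y)\mapsto X^*\otimes Y$ is exact in each variable, and monicity of $\phi_{\mathcal{S}}$ is equivalent to the inclusion $\ker(\iota^{\mathcal{A}}_G)\subseteq\ker(\iota^{\mathcal{S}}_G)$. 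But that inclusion \emph{is} the lemma; everything before it is routine bookkeeping. The sketch you offer for it does not close: after lifting along $\id_{G^*}\otimes p$ and landing in $\Img(\partial_P)$, the relations you must ``push down'' are indexed by endomorphisms $a$ of $P^{\oplus n}$, and such $a$ need not preserve $\ker(p)$, hence do not descend to endomorphisms of $G$, nor to morphisms between objects of $\mathcal{S}$, at all. The subquotient-closure argument you invoke concerns morphisms \emph{out of} $G$; it says nothing about these ambient endomorphisms, which is exactly where the difficulty lives. For orientation: when $\mathcal{A}=\Rep(H)$ the lemma is the element-wise triviality that $\mathbb{F}_{\mathcal{S}}\cong(H/J)^*$ embeds into $H^*=\mathbb{F}_{\mathcal{A}}$ as the span of matrix coefficients of modules in $\mathcal{S}$; the entire content of the categorical statement is to reproduce this without elements, and that is precisely the part you leave open.

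The fallback via the socle does not repair this, because its key reduction is a non sequitur. It is true that a morphism out of a finite-length object is monic iff it is injective on the socle, but $\operatorname{soc}(\mathbb{F}_{\mathcal{S}})$ consists of the simple \emph{subobjects} of $\mathbb{F}_{\mathcal{S}}$ in $\mathcal{A}$; these are not indexed by the simple objects $V$ of $\mathcal{S}$ and are not comparable with the images $\Img(\iota_V)$. Already for $\mathcal{S}=\mathcal{A}=\Rep(H)$ one has $\Img(\iota_V)\cong V^*\otimes V$ for simple $V$, which is generally not a semisimple subobject of $H^*$, so it neither contains nor is contained in the socle; hence nothing has been ``localized to simple objects of $\mathcal{S}$''. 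Worse, linear independence of matrix coefficients of non-isomorphic simples cannot control $\ker(\iota^{\mathcal{A}}_G)$ for a non-semisimple generator $G$: in $\Rep(H)$ that kernel is governed by the annihilator ideal of $G$, which depends on the extension structure of $G$ and not only on its composition factors (a projective cover and the direct sum of its composition factors have the same simples but very different annihilators). So the proposed ``feeding back'' from the simple case to $G$ cannot work as stated, and the proof remains genuinely incomplete at its central point.
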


Set $\CF(\mathcal{A}) := \Hom_{\mathcal{A}}(\unitobj, \mathbb{F}_{\mathcal{A}})$ and $\Gr_k(\mathcal{A}) := k \otimes_{\mathbb{Z}} \Gr(\mathcal{A})$. We recall from Subsection~\ref{subsec:coend-F-property} that $\CF(\mathcal{A})$ is a $k$-algebra which can be thought of as a generalization of the algebra of class functions. Hence the number $j(\mathcal{A}) := \dim_k \CF(\mathcal{A})$ may be thought of as an analogue of the number of conjugacy classes of a finite group.

There is a tensor autoequivalence $(-)^{**}$ on $\mathcal{A}$ defined by taking the double dual object. A {\em pivotal structure} of $\mathcal{A}$ is an isomorphism $p: \id_{\mathcal{A}} \to (-)^{**}$ of tensor functors. Now we assume that $\mathcal{A}$ has a pivotal structure. Then the {\em internal character} of an object $X \in \mathcal{A}$ is defined by
\begin{equation}
  \label{eq:internal-char}
  \mathrm{ch}(X) := \Big( \unitobj
  \xrightarrow{\quad \coev_X \quad} X \otimes X^*
  \xrightarrow{\quad p_X \otimes \id_{X^*} \quad}
  X^{**} \otimes X^*
  \xrightarrow{\quad \iota^{\mathcal{A}}_{X^*} \quad} \mathbb{F}_{\mathcal{A}} \Big).
\end{equation}
Let $\{ V_i \}_{i \in I}$ be a complete set of representatives of isomorphism classes of $\mathcal{A}$, and set $\chi_i = \mathrm{ch}(V_i)$ for $i \in I$. One of main results of \cite{MR3631720} states that the set $\{ \chi_i \}_{i \in I}$ is linearly independent and the map
\begin{equation*}
  \mathrm{ch}: \Gr_k(\mathcal{A}) \to \CF(\mathcal{A}),
  \quad [X] \mapsto \mathrm{ch}(X)
\end{equation*}
is an injective algebra homomorphism. In particular, we have $j(\mathcal{A}) \ge |I|$. By using this inequality, we prove:

\begin{lemma}
  Let $\mathcal{A}$ be a pivotal finite tensor category. Then $j(\mathcal{A}) = 1$ if and only if $\mathcal{A} \approx \Vect$, the category of finite-dimensional vector spaces over $k$.
\end{lemma}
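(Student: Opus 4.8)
The plan is to treat the two implications separately; the forward direction $j(\mathcal{A}) = 1 \Rightarrow \mathcal{A} \approx \Vect$ carries essentially all of the content. For the easy direction, if $\mathcal{A} \approx \Vect$ I would compute $\mathbb{F}_{\mathcal{A}}$ by hand: in $\Vect$ the only simple object is the base field, the universal dinatural transformation is evaluation $V^* \otimes V \to k$, and one checks directly that this exhibits $k = \unitobj$ as the coend $\int^{V} V^* \otimes V$. Hence $\CF(\mathcal{A}) = \Hom_{\mathcal{A}}(\unitobj, \unitobj) = k$ and $j(\mathcal{A}) = 1$.

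For the forward direction, the first step is to apply the inequality $j(\mathcal{A}) \ge |I|$ established above from the injectivity of $\mathrm{ch}$: the hypothesis $j(\mathcal{A}) = 1$ immediately forces $|I| = 1$, and since the tensor unit is always simple, the unique simple object must be $\unitobj$. It then remains only to promote ``exactly one simple object'' to ``semisimple'', because a semisimple finite tensor category with a single simple object is equivalent to $\Vect$. This is where the genuine work lies, since having a single simple object does not by itself force semisimplicity (for instance $\Rep(k[x]/(x^p))$ in characteristic $p$ is a non-semisimple counterexample, for which one should find $j > 1$).

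The key to detecting semisimplicity is to route the argument through $\End(\id_{\mathcal{A}})$, using that a single simple object forces unimodularity. Indeed, the distinguished invertible object $\rho$ of Remark~\ref{rem:w-factor} is invertible, hence simple, hence isomorphic to $\unitobj$, so $\mathcal{A}$ is unimodular. I would then invoke the unimodular identity $\dim_k \Hom_{\mathcal{A}}(\mathbb{F}, \unitobj) = \dim_k \Hom_{\mathcal{A}}(\unitobj, \mathbb{F}) = j(\mathcal{A})$ from that remark, together with the algebra isomorphism~\eqref{eq:F-End-id}, to conclude $\dim_k \End(\id_{\mathcal{A}}) = j(\mathcal{A}) = 1$.

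Finally, writing $\mathcal{A} \approx \Rep(A)$ for a finite-dimensional algebra $A$, one has $\End(\id_{\mathcal{A}}) \cong Z(A)$, so the hypothesis becomes $Z(A) = k$; the unique simple object forces $A$ to be local, and the fact that projective objects of a finite tensor category are injective forces $A$ to be self-injective. The crux is then the purely algebraic claim that a local self-injective algebra with $\dim_k A > 1$ satisfies $\dim_k Z(A) \ge 2$: its left socle is a one-dimensional two-sided ideal, hence central, and it is not contained in $k \cdot 1$ because $1$ is not annihilated by the radical. Thus $Z(A) = k$ forces $\dim_k A = 1$, i.e.\ $\mathcal{A} \approx \Vect$. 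I expect this last step---pinning down that non-semisimplicity manufactures a second central element, equivalently a nontrivial natural endomorphism of $\id_{\mathcal{A}}$---to be the part needing real care; an alternative realization of that extra element is the two-sided integral $\unitobj \to \mathbb{F}$, which lies in $\CF(\mathcal{A})$ precisely because $\mathcal{A}$ is unimodular and is linearly independent from the character $\chi_0 = \mathrm{ch}(\unitobj)$ unless $\mathcal{A}$ is semisimple.
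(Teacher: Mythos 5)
Your proof is correct, and its crucial second half takes a genuinely different route from the paper's. The two arguments coincide up to the midpoint: the inequality $j(\mathcal{A}) \ge |I|$ forces the unique simple object to be $\unitobj$, the distinguished invertible object is then trivial (invertible objects are simple), so $\mathcal{A}$ is unimodular, and Remark~\ref{rem:w-factor} together with \eqref{eq:F-End-id} yields $\dim_k \End(\id_{\mathcal{A}}) = j(\mathcal{A}) = 1$. From here the paper stays categorical: it invokes the integral $\Lambda \in \End(\id_{\mathcal{A}})$ of \cite{MR3631720} and the Maschke-type criterion that $\mathcal{A}$ is semisimple precisely when $\Lambda$ is \emph{not} nilpotent (the paper's wording ``$\Lambda^2 = 0$ if and only if $\mathcal{A}$ is semisimple'' is a typo for $\Lambda^2 \neq 0$; otherwise its own next sentence would prove non-semisimplicity), so one-dimensionality of $\End(\id_{\mathcal{A}})$ immediately gives semisimplicity and hence $\mathcal{A} \approx \Vect$. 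You instead pass to a Morita model $\Rep(A)$, identify $\End(\id_{\mathcal{A}}) \cong Z(A)$, obtain self-injectivity of $A$ from the fact that projective objects in a finite tensor category are injective \cite{MR2119143}, and conclude by the elementary observation that a local self-injective algebra of dimension greater than one has a one-dimensional two-sided socle which is central and not contained in $k \cdot 1$. Your route buys independence from the integral machinery of \cite{MR3631720}---only Morita theory and the projective-injective coincidence are needed---while the paper's route is shorter given tools the survey already develops; note also that your closing alternative (the integral $\unitobj \to \mathbb{F}$ being linearly independent from $\mathrm{ch}(\unitobj)$ when $\mathcal{A}$ is unimodular and non-semisimple) is essentially the paper's own argument transported along the relevant isomorphisms. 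Two small points to tighten in your version: uniqueness of the simple object does not force an arbitrary Morita representative $A$ to be local (take $A = M_n(k)$), so you should choose $A$ basic, e.g.\ $A = \End_{\mathcal{A}}(P_0)^{\op}$ with $P_0$ the projective cover of $\unitobj$, which is harmless since $Z(A)$ is Morita invariant; and centrality of the one-dimensional socle deserves the justification that the left and right characters through which $A$ acts on it both factor through $A/\mathrm{rad}(A) \cong k$ and hence coincide---for a general algebra a one-dimensional two-sided ideal need not be central.
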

\begin{proof}
  The `if' part is clear. We prove the `only if' part. Suppose $j(\mathcal{A}) = 1$. Then, by the inequality $j(\mathcal{A}) \ge |I|$, every simple object of $\mathcal{A}$ is isomorphic to $\unitobj$. Hence $\mathcal{A}$ is unimodular (see Remark~\ref{rem:w-factor} for the definition) and therefore we have
  \begin{equation*}
    \dim_k \End(\id_{\mathcal{A}})
    = \dim_k \Hom_{\mathcal{A}}(\mathbb{F}_{\mathcal{A}}, \unitobj)
    = \dim_k \Hom_{\mathcal{A}}(\unitobj, \mathbb{F}_{\mathcal{A}})
    = j(\mathcal{A}) = 1.
  \end{equation*}
  As a generalization of the Maschke theorem for Hopf algebra, we have introduced a special non-zero element $\Lambda \in \End(\id_{\mathcal{A}})$, called an integral, such that $\Lambda^2 = 0$ if and only if $\mathcal{A}$ is semisimple \cite{MR3631720}. Since $\End(\id_{\mathcal{A}})$ is one-dimensional, the element $\Lambda$ cannot be nilpotent, and hence $\mathcal{A}$ is semisimple. Therefore $\mathcal{A} \approx \Vect$.
\end{proof}

\begin{remark}
  The proof is more technical than the above, but one can prove the same result for arbitrary finite tensor category: A finite tensor category $\mathcal{C}$ is equivalent to $\Vect$ if and only if $j(\mathcal{C}) = 1$ \cite{2016arXiv160206534S}.
\end{remark}

Now the implication (5) $\Rightarrow$ (3) of Theorem~\ref{thm:non-degen} is proved as follows: Let $\mathcal{C}$ be a braided finite tensor category, and let $\mathcal{C}'$ be the M\"uger center of $\mathcal{C}$. We note that, since the braiding restricted to $\mathcal{C}'$ is symmetric, $\mathcal{C}'$ admits a pivotal structure. We consider the composition
\begin{equation*}
  \Phi_{\mathcal{C}}' := \Big(
  \Hom_{\mathcal{C}}(\unitobj, \mathbb{F}_{\mathcal{C}'})
  \xrightarrow{\quad \Hom_{\mathcal{C}}(\unitobj, \phi_{\mathcal{C}'}) \quad}
  \Hom_{\mathcal{C}}(\unitobj, \mathbb{F}_{\mathcal{C}})
  \xrightarrow{\quad \Phi_{\mathcal{C}} \quad}
  \Hom_{\mathcal{C}}(\mathbb{F}_{\mathcal{C}}, \unitobj) \Big).
\end{equation*}
By the definition of $\Phi_{\mathcal{C}}$, it can be shown that $\mathrm{rank}(\Phi_{\mathcal{C}}') = 1$ (more precisely, the image of $\Phi_{\mathcal{C}}'$ is the subspace spanned by the counit $\varepsilon: \mathbb{F}_{\mathcal{C}} \to \unitobj$). Now we suppose that $\Phi_{\mathcal{C}}$ is injective. Then $\Phi_{\mathcal{C}}'$ is injective by Lemma~\ref{lem:full-sub-cat-monic}. Hence
\begin{equation*}
  j(\mathcal{C}')
  = \dim_k \Hom_{\mathcal{C}'}(\unitobj, \mathbb{F}_{\mathcal{C}'})
  = \dim_k \Hom_{\mathcal{C}}(\unitobj, \mathbb{F}_{\mathcal{C}'})
  = 1.
\end{equation*}
By the above lemma, we conclude that $\mathcal{C}' \approx \Vect$, that is, the M\"uger center of $\mathcal{C}$ is trivial. The proof of (5) $\Rightarrow$ (3) is done.

\section{Examples of modular tensor categories}
\label{sec:examples}

\subsection{The Drinfeld center}

Let $\mathcal{C}$ be a finite tensor category. Then $\mathcal{Z}(\mathcal{C})$ is a factorizable braided finite tensor category \cite{MR2097289}. Thus, by Theorem~\ref{thm:non-degen}, $\mathcal{Z}(\mathcal{C})$ is a modular tensor category provided that it admits a ribbon structure. For this reason, it is important to know when $\mathcal{Z}(\mathcal{C})$ admits a ribbon structure.

The pivotal structures and the ribbon structures of $\mathcal{Z}(\mathcal{C})$ are completely classified in \cite{2016arXiv160805905S} and \cite{2017arXiv170709691S}, respectively. Let $\alpha \in \mathcal{C}$ be the distinguished invertible object, and let $\delta_X: \alpha \otimes X \otimes \alpha^* \to X^{****}$ ($X \in \mathcal{C}$) be the Radford isomorphism introduced in \cite{MR2097289}. Following \cite{2017arXiv170709691S}, every ribbon structure of $\mathcal{Z}(\mathcal{C})$ is constructed from a pair $(\beta, j)$ consisting of an invertible object $\beta \in \mathcal{C}$ and an isomorphism $j_X: \beta \otimes X \otimes \beta^* \to X^{**}$ ($X \in \mathcal{C}$) of tensor functors such that $\beta \otimes \beta$ is isomorphic to $\alpha$ and the diagram
\begin{equation*}
  \xymatrix@C=56pt{
    \beta \otimes \beta \otimes X \otimes \beta^* \otimes \beta^*
    \ar[r]^(.57){j_{\beta \otimes X \otimes \beta^*}}
    & \beta \otimes X^{**} \otimes \beta^*
    \ar[r]^(.57){j_{X^{**}}}
    & X^{****} \\
    \ar@{=}[u] \ar[r]^(.57){\cong}
    \beta \otimes \beta \otimes X \otimes (\beta \otimes \beta)^*
    & \ar[r]^(.57){\delta_X}
    \alpha \otimes X \otimes \alpha^*
    & X^{****} \ar@{=}[u]
  }
\end{equation*}
commutes for all objects $X \in \mathcal{C}$. This result generalizes Kauffman and Radford's classification result of the ribbon elements of the Drinfeld double \cite{MR1231205}.

Douglas, Schommer-Pries and Snyder \cite{2013arXiv1312.7188D} introduced the notion of a {\em spherical} finite tensor category with motivation coming from local topological quantum field theories. Their definition coincides with Barrett-Westbury's \cite{MR1686423} in the semisimple case, but is different in general. A spherical structure in the sense of \cite{2013arXiv1312.7188D} is, in a word, a pivotal structure $p$ of $\mathcal{C}$ such that the pair $(\beta, j) = (\unitobj, p)$ makes the above diagram commute. Thus we have:

\begin{theorem}
  \label{thm:Dri-cen-mtc}
  The Drinfeld center of a finite tensor category $\mathcal{C}$ is a modular tensor category if $\mathcal{C}$ is spherical in the sense of \cite{2013arXiv1312.7188D}.
\end{theorem}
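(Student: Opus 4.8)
The plan is to assemble this theorem from the machinery already developed in the excerpt, treating it as a short corollary rather than a from-scratch argument. The target is to show that $\mathcal{Z}(\mathcal{C})$ is a modular tensor category, which by Definition \ref{def:br-FTC-non-degen} means it must be a non-degenerate braided finite tensor category equipped with a ribbon structure. The braided finite tensor category part is entirely given to us: the excerpt states in the subsection on the Drinfeld center that $\mathcal{Z}(\mathcal{C})$ is a factorizable braided finite tensor category, citing \cite{MR2097289}. By Theorem \ref{thm:non-degen}, the equivalence of (1) and (2), factorizability is the same as non-degeneracy. So the only genuine content to verify is that the hypothesis ``$\mathcal{C}$ is spherical in the sense of \cite{2013arXiv1312.7188D}'' produces a ribbon structure on $\mathcal{Z}(\mathcal{C})$.

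For that last point I would invoke the classification of ribbon structures of $\mathcal{Z}(\mathcal{C})$ recalled just before the theorem, following \cite{2017arXiv170709691S}. That result says every ribbon structure of $\mathcal{Z}(\mathcal{C})$ arises from a pair $(\beta, j)$, where $\beta \in \mathcal{C}$ is invertible with $\beta \otimes \beta \cong \alpha$ (the distinguished invertible object), $j_X : \beta \otimes X \otimes \beta^* \to X^{**}$ is an isomorphism of tensor functors, and the displayed compatibility diagram with the Radford isomorphism $\delta_X$ commutes. The strategy is therefore to exhibit such a pair from the spherical structure. The excerpt essentially hands this over: a spherical structure in the sense of \cite{2013arXiv1312.7188D} is, by the stated description, a pivotal structure $p$ of $\mathcal{C}$ such that the pair $(\beta, j) = (\unitobj, p)$ makes the relevant diagram commute. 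So I would set $\beta = \unitobj$ and $j = p$.

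The one step requiring a small check is the condition $\beta \otimes \beta \cong \alpha$: with $\beta = \unitobj$ this reads $\unitobj \cong \alpha$, i.e.\ that $\mathcal{C}$ is unimodular. I expect this to be the only real obstacle, and I anticipate it is resolved by the sphericity hypothesis itself. Indeed, taking $(\beta,j)=(\unitobj,p)$ and feeding it into the commuting diagram forces a compatibility between $p$ and the Radford isomorphism $\delta_X$ that can only hold once the distinguished invertible object is trivial; spelling out the bottom row of the diagram, the isomorphism $\unitobj \otimes X \otimes \unitobj^* \cong \alpha \otimes X \otimes \alpha^*$ that appears there already presupposes (or forces, via the definition of a $\cite{2013arXiv1312.7188D}$-spherical structure) that $\alpha \cong \unitobj$. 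Once this is in hand, the pair $(\unitobj, p)$ satisfies all the hypotheses of the classification, yielding a ribbon structure on $\mathcal{Z}(\mathcal{C})$. Combining the three ingredients---$\mathcal{Z}(\mathcal{C})$ is braided finite tensor, it is factorizable hence non-degenerate by Theorem \ref{thm:non-degen}, and it now carries a ribbon structure---gives exactly the conditions of Definition \ref{def:br-FTC-non-degen}, so $\mathcal{Z}(\mathcal{C})$ is a modular tensor category, completing the proof.
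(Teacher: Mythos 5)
Your proposal is correct and follows essentially the same route as the paper: the paper likewise treats the theorem as an immediate assembly of the factorizability of $\mathcal{Z}(\mathcal{C})$ from \cite{MR2097289}, the equivalence (1) $\Leftrightarrow$ (2) of Theorem~\ref{thm:non-degen}, and the classification of ribbon structures of $\mathcal{Z}(\mathcal{C})$ from \cite{2017arXiv170709691S} applied to the pair $(\beta, j) = (\unitobj, p)$ supplied by the spherical structure. Your side remark on unimodularity is also handled the same way: the condition $\unitobj \otimes \unitobj \cong \alpha$ is built into the paper's packaging of DSS-sphericity, since the compatibility diagram cannot even be written down unless $\alpha \cong \unitobj$.
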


It is well-known that the Drinfeld center of a spherical fusion category (in characteristic zero) is a semisimple modular tensor category. Thus we have obtained a `non-semisimple' generalization of this fact, as expected in \cite[Section 6]{MR2681261}.

\subsection{The Yetter-Drinfeld category}

Let $\mathcal{C}$ be a braided finite tensor category, and let $B$ be a Hopf algebra in $\mathcal{C}$. Then one can define the braided finite tensor category ${}^B_B \mathcal{YD}(\mathcal{C})$ of Yetter-Drinfeld $B$-modules in $\mathcal{C}$ in a similar manner as the category of Yetter-Drinfeld modules over an ordinary Hopf algebra; see \cite{MR1456522}. We have proved in \cite{2016arXiv160206534S} that the M\"uger center of ${}^B_B \mathcal{YD}(\mathcal{C})$ is equivalent to the M\"uger center of $\mathcal{C}$. Hence, by Theorem~\ref{thm:non-degen}, ${}^B_B \mathcal{YD}(\mathcal{C})$ is a non-degenerate braided finite tensor category if and only if $\mathcal{C}$ is.

If $\mathcal{C} = \Rep(H)$ for some finite-dimensional quasitriangular Hopf algebra $H$, then ${}^B_B \mathcal{YD}(\mathcal{C})$ is the category of representations of the Hopf algebra obtained from $B$ and $H$ by the double-bosonization of Majid \cite{MR1645545}. Thus, if $H$ is factorizable, then the double-bosonization of $B$ by $H$ is again a factorizable Hopf algebra.

The above results say nothing about ribbon structures of ${}^B_B \mathcal{YD}(\mathcal{C})$. As remarked in \cite[Proposition 3.6.1]{MR1456522}, ${}^B_B \mathcal{YD}(\mathcal{C})$ is a braided tensor full subcategory of the Drinfeld center of the category ${}_{B}\mathcal{C}$ of left $B$-modules. Thus ${}^B_B \mathcal{YD}(\mathcal{C})$ admits a ribbon structure if $\mathcal{Z}({}_B\mathcal{C})$ is. By Theorem~\ref{thm:Dri-cen-mtc}, we have the following result:

\begin{theorem}
  Let $\mathcal{C}$ be a non-degenerate braided finite tensor category, and let $B$ be a Hopf algebra in $\mathcal{C}$. The Yetter-Drinfeld category ${}^B_B \mathcal{YD}(\mathcal{C})$ is a modular tensor category if ${}_B \mathcal{C}$ is spherical in the sense of \cite{2013arXiv1312.7188D}.
\end{theorem}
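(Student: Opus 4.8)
The plan is to reduce the claim to the two defining ingredients of a modular tensor category in the sense of Definition~\ref{def:br-FTC-non-degen}, namely that ${}^B_B \mathcal{YD}(\mathcal{C})$ is (a) a non-degenerate braided finite tensor category and (b) equipped with a ribbon structure, and to establish each of these from the results already assembled above. Since ${}^B_B \mathcal{YD}(\mathcal{C})$ is a braided finite tensor category by construction, Theorem~\ref{thm:non-degen} is available to handle (a), while Theorems~\ref{thm:Dri-cen-mtc} together with the embedding into a Drinfeld center will handle (b).

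For the non-degeneracy, I would invoke the comparison of M\"uger centers recalled just before the statement: the M\"uger center of ${}^B_B \mathcal{YD}(\mathcal{C})$ is equivalent, as a braided category, to that of $\mathcal{C}$. Since $\mathcal{C}$ is assumed non-degenerate, the equivalence (1) $\Leftrightarrow$ (3) of Theorem~\ref{thm:non-degen} tells us that the M\"uger center of $\mathcal{C}$ is trivial; transporting this triviality across the braided equivalence shows that the M\"uger center of ${}^B_B \mathcal{YD}(\mathcal{C})$ is likewise trivial. Applying the reverse implication (3) $\Rightarrow$ (1) of Theorem~\ref{thm:non-degen} to ${}^B_B \mathcal{YD}(\mathcal{C})$ then yields that it is non-degenerate.

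For the ribbon structure, I would start from the hypothesis that ${}_B\mathcal{C}$ is spherical in the sense of \cite{2013arXiv1312.7188D}. Theorem~\ref{thm:Dri-cen-mtc}, applied to the finite tensor category ${}_B\mathcal{C}$, then gives that $\mathcal{Z}({}_B\mathcal{C})$ is a modular tensor category; in particular it carries a twist. The final step is to exploit the fully faithful braided tensor embedding ${}^B_B \mathcal{YD}(\mathcal{C}) \hookrightarrow \mathcal{Z}({}_B\mathcal{C})$ of \cite[Proposition 3.6.1]{MR1456522} and restrict the twist of $\mathcal{Z}({}_B\mathcal{C})$ along it. A non-degenerate braided finite tensor category equipped with such a ribbon structure is exactly a modular tensor category, which completes the argument.

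The one point requiring genuine care is this last step: verifying that a twist on $\mathcal{Z}({}_B\mathcal{C})$ really does restrict to a twist on the subcategory, rather than merely to a natural family of automorphisms. Concretely, I would check that ${}^B_B \mathcal{YD}(\mathcal{C})$ is closed under the operations entering the ribbon axioms — the tensor product and the duality — so that naturality of $\theta$, the balancing identity $\theta_{X \otimes Y} = (\theta_X \otimes \theta_Y) \circ \sigma_{Y,X} \circ \sigma_{X,Y}$, and the self-duality condition $\theta_{X^*} = (\theta_X)^*$ are all inherited verbatim from the ambient category. Because the embedding is that of a braided \emph{tensor full} subcategory, these closure properties hold and each identity descends without further computation; the remainder of the proof is then a formal assembly of Theorems~\ref{thm:non-degen} and~\ref{thm:Dri-cen-mtc} with the M\"uger-center comparison.
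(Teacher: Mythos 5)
Your proposal is correct and follows essentially the same route as the paper: non-degeneracy of ${}^B_B \mathcal{YD}(\mathcal{C})$ via the M\"uger-center comparison with $\mathcal{C}$ combined with Theorem~\ref{thm:non-degen}, and the ribbon structure by restricting the twist of $\mathcal{Z}({}_B\mathcal{C})$ (modular by Theorem~\ref{thm:Dri-cen-mtc} under the sphericity hypothesis) along the braided tensor full embedding of \cite[Proposition 3.6.1]{MR1456522}. Your closing verification that the twist genuinely restricts to the full subcategory is exactly the content the paper leaves implicit in the phrase ``braided tensor full subcategory.''
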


The author expects that this theorem could give an interesting example of modular tensor categories. Unfortunately, we have no useful criteria to check whether ${}_B \mathcal{C}$ is spherical. The sphericity of \cite{2013arXiv1312.7188D} is defined in terms the Radford isomorphism, but we do not know any description of the Radford isomorphism of ${}_B \mathcal{C}$ (although there has been closely related result \cite{MR3569179}).

\section{Categorical Verlinde formula}
\label{sec:verlinde-formula}

\subsection{The modular group action}

Let $\mathcal{C}$ be a modular tensor category with braiding $\sigma$ and twist $\theta$. Following Lyubashenko, we explain the projective representation of the modular group $\SL_2(\mathbb{Z})$ arising from $\mathcal{C}$.

We consider the Hopf algebra $\mathbb{F} = \mathbb{F}_{\mathcal{C}}$. The multiplication, the unit, the comultiplication, the counit, and the antipode of $\mathbb{F}$ are expressed by $m$, $u$, $\Delta$, $\varepsilon$ and $\gamma$, respectively. There is a non-zero morphism $\Lambda: \unitobj \to \mathbb{F}$ in $\mathcal{C}$ such that
\begin{equation*}
  m \circ (\id_{\mathbb{F}} \otimes \Lambda)
  = \Lambda \circ \varepsilon
  = m \circ (\Lambda \otimes \id_{\mathbb{F}}).
\end{equation*}
Such a morphism $\Lambda$ is called a (two-sided and $\unitobj$-based) {\em integral} of $\mathbb{F}$ (see \cite{MR1759389} and  \cite{MR1685417} for the general theory of integrals of braided Hopf algebras). We fix a non-zero integral $\Lambda$ satisfying the normalization condition
\begin{equation*}
  \omega \circ (\Lambda \otimes \Lambda) = \id_{\unitobj},
\end{equation*}
see, {\it e.g.}, \cite[Section 5.2.3]{MR1862634} for the existence of such an integral $\Lambda$. We also note that such an integral $\Lambda$ is unique up to sign.

Following \cite[Section 4]{MR1324034}, we define the {\em monodromy} $\Omega: \mathbb{F} \otimes \mathbb{F} \to \mathbb{F} \otimes \mathbb{F}$ to be the unique morphism in $\mathcal{C}$ such that the equation
\begin{equation*}
  \Omega \circ (\iota_X \otimes \iota_Y) = (\id_{X^*} \otimes \sigma_{Y^*,X}\sigma_{X,Y^*} \otimes \id_Y)
\end{equation*}
holds for all objects $X, Y \in \mathcal{C}$, where $\iota$ is the universal dinatural transformation for the coend $\mathbb{F}$. By using $\Lambda$ and $\Omega$, we define $S: \mathbb{F} \to \mathbb{F}$ by
\begin{equation*}
  S = (\varepsilon \otimes \id_{\mathbb{F}}) \circ \Omega \circ (\id_{\mathbb{F}} \otimes \Lambda).
\end{equation*}
We also define $T: \mathbb{F} \to \mathbb{F}$ by
\begin{equation*}
  T = \int^{X \in \mathcal{C}} \id_{X^*} \otimes \theta_X
  \quad \Big( = \int^{X \in \mathcal{C}} \theta_{X^*} \otimes \id_X \Big),
\end{equation*}
that is, the morphism such that the equation $T \circ \iota_X = \iota_X \circ (\id_{X^*} \otimes \theta_X)$ holds for all objects $X \in \mathcal{C}$.

Lyubashenko \cite{MR1324034} showed that the equations $(S T)^3 = \lambda S^2$, $S^2 = \gamma^{-1}$ and $S^4 = \theta_{\mathbb{F}}^{-1}$ hold in $\End_{\mathcal{C}}(\mathbb{F})$ up to scalar multiples. Since $\theta$ is a natural isomorphism, and since $\theta_{\unitobj} = \id_{\unitobj}$, we see that the assignment
\begin{equation*}
  \SL_2(\mathbb{Z}) \to \mathrm{GL}(\Hom_{\mathcal{C}}(\unitobj, \mathbb{F})),
  \quad \mathfrak{s} \mapsto \Hom_{\mathcal{C}}(\unitobj, S),
  \quad \mathfrak{t} \mapsto \Hom_{\mathcal{C}}(\unitobj, T)
\end{equation*}
defines a projective representation of $\SL_2(\mathbb{Z})$ on the space $\Hom_{\mathcal{C}}(\unitobj, \mathbb{F})$ of class functions of $\mathcal{C}$.

\subsection{The categorical Verlinde formula}

We now give the categorical Verlinde formula for a modular tensor category recently proposed by Gainutdinov and Runkel \cite{2016arXiv160504448G}.

Let $\mathcal{C}$ be a modular tensor category, and let $\{ V_i \}_{i \in I}$ be a complete set of representatives of isomorphism classes of simple objects of $\mathcal{C}$. Since a ribbon category has a pivotal structure, we can define the internal character $\mathrm{ch}(X)$ of an object $X \in \mathcal{C}$ by~\eqref{eq:internal-char}. Set $\CF(\mathcal{C}) := \Hom_{\mathcal{C}}(\unitobj, \mathbb{F})$, $\Gr_k(\mathcal{C}) := k \otimes_{\mathbb{Z}} \Gr(\mathcal{C})$ and $\chi_i := \mathrm{ch}(V_i)$ for $i \in I$. As we have recalled, the set $\{ \chi_i \}_{i \in I}$ is linearly independent and the map $\mathrm{ch}: \Gr_k(\mathcal{C}) \to \CF(\mathcal{C})$ is an injective homomorphism of algebras. Thus, if we have
\begin{equation*}
  [V_i \otimes V_j] = \sum_{a \in I} N_{i j}^a [V_a]
\end{equation*}
in the Grothendieck ring $\Gr(\mathcal{C})$, then
\begin{equation*}
  \chi_i \star \chi_j = \sum_{a \in I} N_{i j}^a \chi_a
\end{equation*}
in the algebra $\CF(\mathcal{C})$. This means that, in the case where $\mathrm{char}(k) = 0$, the `fusion rule' $\{ N_{i j}^a \}$ of $\mathcal{C}$ can be obtained by computing the product of $\chi_i$'s.

In view of applications to logarithmic conformal field theories, Gainutdinov and Runkel \cite{2016arXiv160504448G} utilize this fact to establish the categorical Verlinde formula. To explain their formula, we note that the integral $\Lambda$ induces an isomorphism
\begin{equation}
  \label{eq:cat-Ver-iso-1}
  \Hom_{\mathcal{C}}(\mathbb{F}, \unitobj) \to \Hom_{\mathcal{C}}(\unitobj, \mathbb{F}),
  \quad f \mapsto (f \otimes \id) \circ \Delta \circ \Lambda
\end{equation}
of vector spaces. For each $i \in I$, we define $\phi_i \in \End(\id_{\mathcal{C}})$ to be the element corresponding to $\chi_i$ via the isomorphism
\begin{equation}
  \label{eq:cat-Ver-iso-2}
  \Hom_{\mathcal{C}}(\unitobj, \mathbb{F})
  \xrightarrow{\quad \text{the inverse of \eqref{eq:cat-Ver-iso-1}} \quad}
  \Hom_{\mathcal{C}}(\mathbb{F}, \unitobj)
  \xrightarrow{\quad \eqref{eq:F-End-id} \quad}
  \End(\id_{\mathcal{C}}).
\end{equation}
Then the set $\{ \phi_i \}_{i \in I}$ is linearly independent in $\End(\id_{\mathcal{C}})$. Finally, we define the map $\mathfrak{S}_{\mathcal{C}}: \End(\id_{\mathcal{C}}) \to \End(\id_{\mathcal{C}})$ so that the following diagram is commutative:
\begin{equation*}
  \xymatrix@R=16pt@C=56pt{
    \Hom_{\mathcal{C}}(\unitobj, \mathbb{F})
    \ar[r]^{\text{\eqref{eq:cat-Ver-iso-2}}}
    \ar[d]_{\Hom_{\mathcal{C}}(\unitobj, S)}
    & \End(\id_{\mathcal{C}}) \ar[d]^{\mathfrak{S}_{\mathcal{C}}} \\
    \Hom_{\mathcal{C}}(\unitobj, \mathbb{F})
    \ar[r]^{\text{\eqref{eq:cat-Ver-iso-2}}}
    & \End(\id_{\mathcal{C}}).}
\end{equation*}

\begin{theorem}[Gainutdinov and Runkel \cite{2016arXiv160504448G}]
  \begin{equation}
    \label{eq:cat-Ver-GR}
    \mathfrak{S}_{\mathcal{C}}^{-1}(\mathfrak{S}_{\mathcal{C}}(\phi_i) \circ \mathfrak{S}_{\mathcal{C}}(\phi_j))
    = \sum_{a \in I} N_{i j}^a \phi_a
    \quad (i, j \in I).
  \end{equation}
\end{theorem}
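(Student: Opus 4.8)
The plan is to reduce the statement to a single compatibility between the $S$-transformation $S\colon\mathbb{F}\to\mathbb{F}$, the integral $\Lambda$, and the Drinfeld map $\Phi_{\mathcal{C}}$, after which the formula drops out by chaining together the algebra homomorphisms already recorded in the previous subsections. Throughout, write $A=\Hom_{\mathcal{C}}(\unitobj,\mathbb{F})$ for the algebra with product $\chi\star\chi'=m\circ(\chi\otimes\chi')$, and $B=\Hom_{\mathcal{C}}(\mathbb{F},\unitobj)$ for the algebra with product $a\star b=(a\otimes b)\circ\Delta$. There are three maps available: the algebra isomorphism $\Xi\colon B\to\End(\id_{\mathcal{C}})$ of \eqref{eq:F-End-id}; the algebra homomorphism $\Phi_{\mathcal{C}}\colon A\to B$ of \eqref{eq:br-FTC-Dri-map}, which is multiplicative precisely because $\omega$ is a Hopf pairing; and the linear isomorphism $\Psi\colon B\to A$, $f\mapsto(f\otimes\id)\circ\Delta\circ\Lambda$, of \eqref{eq:cat-Ver-iso-1}. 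By construction the map \eqref{eq:cat-Ver-iso-2} is $F:=\Xi\circ\Psi^{-1}\colon A\to\End(\id_{\mathcal{C}})$, so that $\phi_i=F(\chi_i)$ and $\mathfrak{S}_{\mathcal{C}}=F\circ\Hom_{\mathcal{C}}(\unitobj,S)\circ F^{-1}$.

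The heart of the argument is the identity
\[
  \Hom_{\mathcal{C}}(\unitobj,S)=\Psi\circ\Phi_{\mathcal{C}}\colon A\to A,
\]
equivalently $S\circ\chi=\bigl(\omega\circ(\chi\otimes\id_{\mathbb{F}})\otimes\id_{\mathbb{F}}\bigr)\circ\Delta\circ\Lambda$ for every $\chi\in A$. Granting this, the formula is immediate. First, $\Psi^{-1}(S\circ\chi_i)=\Phi_{\mathcal{C}}(\chi_i)$, so that $\mathfrak{S}_{\mathcal{C}}(\phi_i)=F(S\circ\chi_i)=\Xi\bigl(\Phi_{\mathcal{C}}(\chi_i)\bigr)$. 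Since $\Xi$ carries $\star$ on $B$ to composition on $\End(\id_{\mathcal{C}})$ and $\Phi_{\mathcal{C}}$ is an algebra homomorphism,
\[
  \mathfrak{S}_{\mathcal{C}}(\phi_i)\circ\mathfrak{S}_{\mathcal{C}}(\phi_j)
  =\Xi\bigl(\Phi_{\mathcal{C}}(\chi_i)\star\Phi_{\mathcal{C}}(\chi_j)\bigr)
  =\Xi\bigl(\Phi_{\mathcal{C}}(\chi_i\star\chi_j)\bigr).
\]
Using $\chi_i\star\chi_j=\sum_{a\in I}N_{ij}^a\chi_a$ together with $\Xi\bigl(\Phi_{\mathcal{C}}(\chi_a)\bigr)=\mathfrak{S}_{\mathcal{C}}(\phi_a)$, the right-hand side equals $\sum_{a}N_{ij}^a\,\mathfrak{S}_{\mathcal{C}}(\phi_a)=\mathfrak{S}_{\mathcal{C}}\bigl(\sum_{a}N_{ij}^a\phi_a\bigr)$; applying $\mathfrak{S}_{\mathcal{C}}^{-1}$ gives \eqref{eq:cat-Ver-GR}. (Here $\mathfrak{S}_{\mathcal{C}}$ is invertible because $S$ is, which holds since $S^2=\gamma^{-1}$ up to scalar.)

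The main obstacle is thus the identity $\Hom_{\mathcal{C}}(\unitobj,S)=\Psi\circ\Phi_{\mathcal{C}}$, which carries all of the ``Fourier'' content of the Verlinde formula: it says exactly that $S$, normalized by the integral, intertwines the convolution product built from the multiplication $m$ of $\mathbb{F}$ (the fusion product on class functions) with the convolution product built from the comultiplication $\Delta$ (the ``pointwise'', diagonalized product). I would prove it by Lyubashenko's coend calculus: unfold $S=(\varepsilon\otimes\id_{\mathbb{F}})\circ\Omega\circ(\id_{\mathbb{F}}\otimes\Lambda)$ and $\omega=(\varepsilon\otimes\varepsilon)\circ\Omega$ in terms of the universal dinatural transformation $\iota$ (the latter following from the defining relations of $\omega$ and $\Omega$ and from $\varepsilon\circ\iota_X=\eval_X$), and then use the compatibility of the monodromy $\Omega$ with $\Delta$ together with the two-sided integral property of $\Lambda$ to transfer the copy of $\Lambda$ sitting in the second factor of $\Omega$ into the $\Delta$-splitting on the right-hand side. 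The delicate points are (i) tracking the antipode $\gamma$ and the side on which $\Lambda$ is inserted, since Lyubashenko's relations for $S$ involve $\gamma$ and $\gamma^{-1}$, and (ii) checking that the normalization $\omega\circ(\Lambda\otimes\Lambda)=\id_{\unitobj}$ is exactly what forces the scalar in the identity to be $1$; any other scalar would survive into \eqref{eq:cat-Ver-GR} and spoil the fusion coefficients. Once this one identity is verified, everything else is the formal bookkeeping above.
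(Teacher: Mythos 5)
Your proposal is correct and takes essentially the same approach as the paper: your key identity $\Hom_{\mathcal{C}}(\unitobj,S)=\Psi\circ\Phi_{\mathcal{C}}$ is precisely (after composing with the isomorphism \eqref{eq:F-End-id}) the commutativity of the square relating \eqref{eq:cat-Ver-iso-2}, $\Phi_{\mathcal{C}}$, \eqref{eq:F-End-id} and $\mathfrak{S}_{\mathcal{C}}$ that the paper asserts with ``one can check,'' and the remaining bookkeeping---chaining the algebra homomorphisms $\Phi_{\mathcal{C}}$ and \eqref{eq:F-End-id} through $\chi_i\star\chi_j=\sum_{a}N_{ij}^a\chi_a$ and applying $\mathfrak{S}_{\mathcal{C}}^{-1}$---is identical. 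The only difference is presentational: you flag that key identity as the real content and sketch its verification by coend calculus (including why the normalization $\omega\circ(\Lambda\otimes\Lambda)=\id_{\unitobj}$ fixes the scalar), whereas the paper leaves it as an unproved check.
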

\begin{proof}
  One can check that the diagram
  \begin{equation*}
  \xymatrix@R=16pt@C=56pt{
    \Hom_{\mathcal{C}}(\unitobj, \mathbb{F})
    \ar[r]^{\text{\eqref{eq:cat-Ver-iso-2}}}
    \ar[d]_{\Phi_{\mathcal{C}}}
    & \End(\id_{\mathcal{C}}) \ar[d]^{\mathfrak{S}_{\mathcal{C}}} \\
    \Hom_{\mathcal{C}}(\mathbb{F}, \unitobj)
    \ar[r]^{\text{\eqref{eq:F-End-id}}}
    & \End(\id_{\mathcal{C}}).}
  \end{equation*}
  is commutative. We recall from Subsection~\ref{subsec:coend-F-property} that both the maps $\Phi_{\mathcal{C}}$ and \eqref{eq:F-End-id} are homomorphisms of algebras. Hence the composition
  \begin{equation*}
    \Psi_{\mathcal{C}} := \Big( \Hom_{\mathcal{C}}(\unitobj, \mathbb{F})
    \xrightarrow{\quad \eqref{eq:cat-Ver-iso-2} \quad}
    \End(\id_{\mathcal{C}})
    \xrightarrow{\quad \mathfrak{S}_{\mathcal{C}} \quad}
    \End(\id_{\mathcal{C}}) \Big)
  \end{equation*}
  is a homomorphism of algebras (although neither \eqref{eq:cat-Ver-iso-2} nor $\mathfrak{S}_{\mathcal{C}}$ is in general). By the definition of $\phi_i$'s, we now have
  \begin{align*}
    \mathfrak{S}_{\mathcal{C}}(\phi_i) \circ \mathfrak{S}_{\mathcal{C}}(\phi_j)
    & = \Psi_{\mathcal{C}}(\chi_i) \circ \Psi_{\mathcal{C}}(\chi_j)
    = \Psi_{\mathcal{C}}(\chi_i \star \chi_j) \\
    & = \sum_{a \in I} N_{i j}^a \Psi_{\mathcal{C}}(\chi_a)
    = \sum_{a \in I} N_{i j}^a \mathfrak{S}_{\mathcal{C}}(\phi_a).
  \end{align*}
  The desired formula is obtained by applying $\mathfrak{S}_{\mathcal{C}}^{-1}$ to both sides.
\end{proof}

It is explained in \cite{2016arXiv160504448G} that this formula reduces to \eqref{eq:cat-Ver-ss} in the semisimple case (with an appropriate choice of the integral $\Lambda$). One also finds several applications of this formula and further results in \cite{2016arXiv160504448G,2017arXiv170201086F,2017arXiv170300150G,2017arXiv170608164F}.

\begin{remark}
  Given an object $V \in \mathcal{C}$, we define $\phi_V \in \End(\id_{\mathcal{C}})$ to be the element corresponding to the internal character $\mathrm{ch}(V) \in \Hom_{\mathcal{C}}(\unitobj, \mathbb{F})$ via \eqref{eq:cat-Ver-iso-2} and set $\psi_V = \mathfrak{S}_{\mathcal{C}}(\phi_V)$. Then the formula~\eqref{eq:cat-Ver-GR} is equivalent to
  \begin{equation}
    \label{eq:cat-Ver-GR-psi}
    \psi_{V_i} \circ \psi_{V_j} = \psi_{V_i \otimes V_j} \quad (i, j \in I).
  \end{equation}
  As explained in \cite[Remark 3.10 (2)]{2016arXiv160504448G}, for $X \in \mathcal{C}$, the $X$-th component of the natural transformation $\psi_{V}$ is given by the following composition:
  \begin{equation*}
    X \xrightarrow{\quad \coev' \otimes \id \quad}
    V^* \otimes V \otimes X \xrightarrow{\quad \id \otimes \sigma_{V,X}^{-1} \sigma_{X,V}^{-1} \quad}
    V^* \otimes V \otimes X \xrightarrow{\quad \eval \otimes \id \quad}
    X,
  \end{equation*}
  where $\coev' = (\id_{V^*} \otimes \theta_V) \circ \sigma_{V,V^*} \circ \coev_V$. Graphically, this morphism can be expressed as follows:
  \begin{equation*}
    (\psi_{V})_X = \quad \SelectTips{cm}{10} \xy /r1pc/:
    (0,1)="A0", "A0"-(.4,0)="A1"; "A0"+(.4,0)="A2"
    **\crv{
      "A1"+(-1,0) & "A1"+(-3,-.75) & "A1"+(-1,-1.5)
      & "A2"+(1,-1.5) & "A2"+(3,-.75) & "A2"+(1,0)}
    ?> *\dir{>} ?(.75) *+!L{V} ?(.5)="P1",
    (0,2); "P1"+(0,.5) **\dir{-} ?< *+!D{X},
    "P1"-(0,.5); (0,-2) **\dir{-} ?> *\dir{>},
    \endxy
  \end{equation*}
  Once we have obtained this graphical expression, the formula \eqref{eq:cat-Ver-GR-psi} can easily proved by the graphical calculus in a ribbon category.
\end{remark}

\begin{remark}
  If $\mathcal{C}$ is semisimple, then we have
  \begin{equation*}
    (\phi_i)_{V_j} = \delta_{i j} \frac{\sqrt{\dim(\mathcal{C})}}{\dim(V_i)} \, \id_{V_j}
  \end{equation*}
  for all $i, j \in I$, where the square root of $\dim(\mathcal{C})$ depends on the choice of $\Lambda$ \cite[Remark 3.10 (3)]{2016arXiv160504448G}. One also finds several results on $\phi_i$'s in the non-semisimple case in \cite{2017arXiv170300150G}. In that paper, they also established another version of the categorical Verlinde formula that generalizes the result of Cohen-Westreich \cite{MR2464107}.
\end{remark}

\bibliographystyle{alpha}
\def\cprime{$'$}

\end{document}